\documentclass{article}
\usepackage[a4paper]{geometry}
\usepackage[initials]{amsrefs}
\usepackage{amssymb,amsthm,color,enumitem,mathrsfs,mleftright,tikz-cd,hyperref}
\usepackage[fleqn]{amsmath}
\usepackage{amsmath}
\usepackage{color}
\usepackage{thmtools}
\usepackage{thm-restate}

\newcommand{\edit}[1]{#1}

\newtheorem{theorem}{Theorem}[section]
\newtheorem{lemma}[theorem]{Lemma}
\newtheorem{corollary}[theorem]{Corollary}
\newtheorem{proposition}[theorem]{Proposition}

\theoremstyle{definition}
\newtheorem{definition}[theorem]{Definition}
\newtheorem{remark}[theorem]{Remark}

\theoremstyle{plain}

\relpenalty=10000
\binoppenalty=10000

\tikzcdset{arrow style=tikz, diagrams={>=stealth}}
\tikzcdset{every label/.append style = {font = \normalsize}}
\definecolor{hcol}{RGB}{33,120,33}
\definecolor{fcol}{RGB}{44,90,160}
\definecolor{gcol}{RGB}{170,0,0}

\numberwithin{equation}{section}
\numberwithin{figure}{section}

\setlength{\mathindent}{2cm}

\renewcommand{\geq}{\geqslant}
\renewcommand{\leq}{\leqslant}
\newcommand{\C}{\mathbb{C}}
\newcommand{\D}{\mathbb{D}}
\newcommand{\R}{\mathbb{R}}

\makeatletter
\renewcommand\section{\@startsection {section}{1}{\z@}%
                                   {-3.5ex \@plus -1ex \@minus -.2ex}%
                                   {1.3ex \@plus.2ex}%
                                   {\bf\large}}
\makeatother

\makeatletter
\renewcommand\subsection{\@startsection {subsection}{1}{\z@}%
                                   {-3.5ex \@plus -1ex \@minus -.2ex}%
                                   {1.3ex \@plus.2ex}%
                                   {\bf}}
\makeatother


\title{ \vspace{-5ex}\bf \large Iterated function systems of holomorphic maps\footnotetext{
		2020 Mathematics Subject Classification: Primary 30D05; Secondary 37F99.
		
		Key words: holomorphic map, hyperbolic Riemann surface, iterated function system. 
		
		The first author was partially supported by the French-Italian University and by Campus France through the Galileo program, under the project \textit{From rational to transcendental: complex dynamics and parameter spaces}, as well as by Istituto Nazionale di Alta Matematica (INdAM).
		
		The second author was supported by EPSRC grant EP/W002817/1.
		
		There is no data associated with this article.}}

\author{\normalsize Marco Abate and Ian Short}


\numberwithin{equation}{section}
\numberwithin{figure}{section}

\renewcommand{\ge}{\geqslant}
\renewcommand{\le}{\leqslant}
\renewcommand{\geq}{\geqslant}
\renewcommand{\leq}{\leqslant}

\DeclareMathOperator{\id}{\textnormal{id}}

\DeclareMathOperator{\Hol}{\textnormal{Hol}}

\DeclareMathOperator{\Aut}{\textnormal{Aut}}

\setlist[enumerate]{leftmargin=25pt,itemsep=0pt,topsep=3pt}
\setlist[enumerate,1]{label=\textnormal{(\roman*)}}

\date{\normalsize\today}

\begin{document}

\maketitle

\begin{abstract}
We unify and advance a host of works on iterated function systems of holomorphic self-maps of hyperbolic Riemann surfaces. Our foremost result is a generalisation to left iterated function systems of an unpublished and little known theorem of Heins on iteration in the unit disc. Applications abound -- to work of Benini et al.\ on transcendental dynamics, to the theory of hyperbolic steps of holomorphic maps, and to left semiconjugacy in the unit disc. We extend other work of Benini et al.\ and Ferreira on relatively compact left iterated function systems, and we prove a hyperbolic distance inequality for holomorphic maps that generalises a theorem of Bracci, Kraus, and Roth. Additionally, we  strengthen results of the first author and Christodoulou on left iterated function systems, removing the need for Bloch domains, and we answer an open question from their work. Finally, we establish a version of the Heins theorem for right iterated functions systems, and we generalise theorems of Beardon and Kuznetsov on right iterated function systems in relatively compact semigroups of holomorphic maps.   
\end{abstract}

\section{Introduction}\label{section1}

The objective of this paper is to explore the dynamics of iterated function systems of holomorphic self-maps of hyperbolic Riemann surfaces. In so doing we advance the results of a host of other works in this field including \cites{AbCh2022,BeCaMiNg2004,Fe2023,He1991,Ku2007}. Throughout, we let $X$ denote a hyperbolic Riemann surface with \edit{hyperbolic} distance $\omega_X$. We denote the open unit disc by $\mathbb{D}$ and write $\omega$ for $\omega_\mathbb{D}$.

Let $C(X,X)$ denote the space of continuous self-maps of $X$ endowed with the compact-open topology. In this topological space, a sequence $\{f_n\}$ converges to a map $f$ if and only if $f_n(z)\to f(z)$ uniformly on compact subsets of $X$, in which case we write $f_n\to f$. The space $\Hol(X,X)$ of holomorphic self-maps of $X$ is a closed subspace of $C(X,X)$, and the space $\Aut(X)$ of conformal automorphisms of $X$ is a closed subspace of $\Hol(X,X)$ (see \cite{Ab2022}*{Corollary~1.7.21}). A \emph{semicontraction} of $X$ is a map $f\colon X\longrightarrow X$ with 
\[
\omega_X(f(z),f(w))\leq \omega_X(z,w),\quad\text{for $z,w\in X$.}
\]
By the Schwarz--Pick lemma, each holomorphic self-map of $X$ is a semicontraction of $X$, and automorphisms of $X$ are isometries in the hyperbolic metric. 

A \emph{left iterated function system} in $\Hol(X,X)$ is a sequence $\{L_n\}$ given by $L_n=f_n\circ f_{n-1}\circ \dots\circ f_1$, where $f_n\in \Hol(X,X)$. We say that $\{L_n\}$ is \emph{generated} by $\{f_n\}$. A \emph{right iterated function system} in $\Hol(X,X)$  is a sequence $\{R_n\}$ of the form $R_n=f_1\circ f_{2}\circ \dots\circ f_n$, where $f_n\in \Hol(X,X)$. We assume throughout that any sequence $\{h_n\}$ in $\Hol(X,X)$ comes with a zeroth term $h_0$ equal to the identity map $\id_X$, unless stated otherwise.

Iterated function systems originated in fractal geometry as systems of contractions of complete metric spaces \edit{-- see  \cite{Ba1988,BaDe1985,Hu1981} for some foundational works on this subject}. The language of iterated function systems was used for holomorphic self-maps of hyperbolic Riemann surfaces in, for example, \cite{AbCh2022,KeLa2007}. Such systems have appeared under various other names; for a quick sample, they are described as non-autonomous dynamical systems in \cite{BeEvFaRiSt2024}, where the qualifiers left/right are replaced by forwards/backwards (as they are in \cite{KeLa2007} and other works). Beardon refers to iterated function systems as composition sequences in \cite{Be2001}, and this phrase was adopted by Jacques and the second author in \cite{JaSh2022}. Gou\"ezel and Karlsson use the language of cocycles in \cite{GoKa2020}, where they establish a version of the Wolff--Denjoy theorem for random cocycles of semicontractions of a metric space.

The dynamics of left and right iterated function systems are quite different, and working with each type of sequence has its own distinct advantages. For instance, for a left iterated function system $\{L_n\}$, we have
\[
\omega_X(z,w) \geq \omega_X(L_1(z),L_1(w)) \geq \omega_X(L_2(z),L_2(w))\geq \dotsb,
\]
for any pair of points $z,w\in X$. This property fails for right iterated function systems, in general. However, any right iterated function system $\{R_n\}$ satisfies other advantageous properties; for example,  the inclusion $f_n(X)\subseteq X$ implies that $R_n(X)\subseteq R_{n-1}(X)$, and hence we obtain a nested sequence of sets
\[
X \supseteq R_1(X)\supseteq R_2(X) \supseteq \dotsb.
\]
Also, for right iterated function systems, we have
\[
\omega_X(R_{n-1}(z),R_{n}(z))\leq \omega_X(z,f_n(z)),
\] 
for $z\in X$. When the maps $\{f_n\}$ are chosen from a finite collection, this inequality implies that there is a bounded step between successive terms of the sequence $\{R_n(z)\}$, giving us tight control on the dynamics of $\{R_n\}$. 

Let us now summarise our main results (labelled A to~H) for left and right iterated function systems in turn.

\subsection{Left iterated function systems}

Our first result concerns left iterated function systems on the unit disc.

\begin{restatable}{thm}{theoremA}
\label{theoremA}
For any left iterated function system $L_n=f_n\circ f_{n-1}\circ \dots\circ f_1$ in $\Hol(\D,\D)$ there is a sequence $\{\gamma_n\}$ of conformal automorphisms of~$\D$ and a map $h\in\Hol(\D,\D)$, unique up to left composition by elements of $\Aut(\D)$, such that $\gamma_n^{-1}\circ L_n \to h$.
\end{restatable}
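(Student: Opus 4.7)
The plan is to construct the automorphisms $\gamma_n$ inductively so as to normalise $L_n$ at the base point $0$, and then extract the limit. Set $\gamma_0 = \id_\D$; given $\gamma_{n-1}$, pick $\gamma_n \in \Aut(\D)$ so that the conjugate $\alpha_n := \gamma_n^{-1} \circ f_n \circ \gamma_{n-1}$ satisfies $\alpha_n(0) = 0$ and $\alpha_n'(0) \geq 0$ (this fixes $\gamma_n(0) = L_n(0)$ and, up to an inessential choice when $\alpha_n'(0) = 0$, the argument of $\gamma_n'(0)$). The renormalised maps $h_n := \gamma_n^{-1} \circ L_n$ then satisfy the recursion $h_n = \alpha_n \circ h_{n-1}$ starting from $h_0 = \id_\D$, each $h_n$ fixes $0$, and $h_n'(0) = \prod_{k=1}^n \alpha_k'(0)$ is a decreasing sequence in $[0,1]$ by the Schwarz lemma applied to each $\alpha_k$. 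In particular $\{h_n\}$ is a normal family by Montel's theorem.

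For convergence, let $\lambda := \lim_n h_n'(0) \in [0,1]$ and consider the tail composition $A_{m,n} := \alpha_n \circ \dots \circ \alpha_{m+1}$, which fixes $0$ with derivative $A_{m,n}'(0) = h_n'(0)/h_m'(0)$. When $\lambda > 0$ this ratio tends to $1$ as $m \to \infty$, uniformly in $n \geq m$; Schwarz's lemma applied to $w \mapsto A_{m,n}(w)/w$ then forces $A_{m,n} \to \id_\D$ compactly. Since $h_n = A_{m,n} \circ h_m$ and $|h_m(z)| \leq |z|$ confines each value $h_m(z)$ to a fixed compact subset of $\D$, this yields $h_n(z) - h_m(z) \to 0$ uniformly on compacta, so $\{h_n\}$ is Cauchy and converges to some $h \in \Hol(\D,\D)$.

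The main obstacle is the degenerate case $\lambda = 0$, in which $A_{m,n}'(0)$ need not approach $1$ and the preceding argument breaks down. Here I would analyse the product $h_n(z)/z = \prod_{k=1}^n \beta_k(h_{k-1}(z))$, with $\beta_k(w) := \alpha_k(w)/w \in \Hol(\D,\overline{\D})$, showing that the moduli form a convergent infinite product and then controlling the phases (exploiting the higher-order rotational freedom in the choice of $\gamma_n$ left over when $\alpha_n'(0) = 0$). Uniqueness of $h$ up to left composition by an automorphism is the easy part: if $\delta_n \in \Aut(\D)$ also gives $\delta_n^{-1} \circ L_n \to h'$, then the automorphisms $T_n := \delta_n^{-1} \circ \gamma_n$ satisfy $T_n \circ h_n \to h'$, and a local inversion of $h$ (when $h$ is non-constant) forces $T_n$ to converge in $\Aut(\D)$ to some $T$ with $h' = T \circ h$; the constant case is immediate from the transitivity of $\Aut(\D)$ on $\D$.
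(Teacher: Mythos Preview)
Your argument for the case $\lambda>0$ is correct and clean: the uniform Schwarz estimate on the tail compositions $A_{m,n}$ really does force the sequence $\{h_n\}$ to be Cauchy. The genuine gap is the case $\lambda=0$, which you explicitly leave unfinished. This case is not a degenerate afterthought; it is where the content lies. The limit $h$ can perfectly well be nonconstant while $h'(0)=0$ (take for instance $f_1(z)=z^2$ and $f_n=\id_\D$ for $n\ge2$, so that $L_n(z)=z^2$ for all $n\ge1$). In such situations your derivative normalisation $\alpha_n'(0)\ge0$ carries no information once some $\alpha_k'(0)=0$, and your proposed product analysis of $h_n(z)/z=\prod_k\beta_k(h_{k-1}(z))$ only re-expresses the problem: the moduli $|h_n(z)|$ are decreasing and hence converge, but there is nothing in your outline that pins down the phases, and the ``higher-order rotational freedom'' you allude to is not available when all $\alpha_k'(0)>0$ yet $\prod_k\alpha_k'(0)=0$.

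The paper's proof sidesteps this entirely by normalising at a \emph{value} rather than a derivative. One first observes (as you implicitly have) that $|H_n(z)|=|\gamma_n^{-1}\circ L_n(z)|$ is decreasing for every~$z$, since $\omega(H_n(z),0)=\omega(L_n(z),L_n(0))$. Either $H_n\to0$, or there is some $w\in\D$ with $|H_n(w)|\to c>0$; in the second case one adjusts each $\gamma_n$ by a rotation so that $H_n(w)$ is real and positive, hence $H_n(w)\to w_0:=c>0$. Now any two subsequential limits $h,k$ of $\{H_n\}$ satisfy $h(0)=k(0)=0$ and $h(w)=k(w)=w_0$. Interlacing the two subsequences one writes $H_{n_i}=K_i\circ H_{m_i}$ with $K_i(0)=0$; a subsequential limit $\psi$ of $\{K_i\}$ satisfies $\psi\circ h=k$, $\psi(0)=0$, $\psi(w_0)=w_0$, and the only holomorphic self-map of~$\D$ fixing two distinct points is the identity. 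Hence $h=k$ and the full sequence converges. This two-fixed-point trick is the missing idea.

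For uniqueness, your instinct is right but the execution via ``local inversion of $h$'' is awkward. It is simpler to note that $T_n(0)=T_n(h_n(0))\to h'(0)$, so $\{T_n\}$ is relatively compact in $\Aut(\D)$; any subsequential limit $T$ then satisfies $T\circ h=h'$, and that is all the statement requires.
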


Theorem~\ref{theoremA} is a generalisation of an unpublished result \cite{He1991}*{Theorem~1} of Heins, which is a similar statement but with the assumption that all the maps $f_n$ are equal. 

\edit{Using hyperbolic distortion we can obtain a corollary of Theorem~\ref{theoremA} in which the sequence $\{\gamma_n\}$ does not feature.} Briefly, the \emph{hyperbolic distortion} of a self-map $f$ of a hyperbolic Riemann surface $X$ is the function 
\[
f^\#(z) = \lim_{w\to z} \frac{\omega_X\bigl(f(z),f(w)\bigr)}{\omega_X(z,w)}\;.
\]
With this notation we have $(\gamma_n^{-1}\circ L_n)^\#=((\gamma_n^{-1})^\#\circ L_n)L_n^\#=L_n^\#$, since the hyperbolic distortion of an automorphism is identically~1. We discuss hyperbolic distortion in detail in Section~\ref{section3}.

\begin{restatable}{cor}{corollaryB}
\label{corollaryB}
For any left iterated function system $L_n=f_n\circ f_{n-1}\circ \dots\circ f_1$ in $\Hol(\D,\D)$ there is a map $h\in\Hol(\D,\D)$, unique up to left composition by elements of $\Aut(\D)$, such that $L_n^\# \to h^\#$.
\end{restatable}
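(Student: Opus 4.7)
The plan is to deduce Corollary~\ref{corollaryB} directly from Theorem~\ref{theoremA} together with the behaviour of hyperbolic distortion under composition. First I would apply Theorem~\ref{theoremA} to obtain automorphisms $\gamma_n$ of $\D$ and a map $h\in\Hol(\D,\D)$ with $\gamma_n^{-1}\circ L_n\to h$ in the compact-open topology. The chain rule for $f^{\#}$, combined with the fact (noted just before the statement of the corollary) that $\alpha^{\#}\equiv 1$ for every $\alpha\in\Aut(\D)$, gives
\[
(\gamma_n^{-1}\circ L_n)^{\#}(z) = (\gamma_n^{-1})^{\#}\bigl(L_n(z)\bigr)\cdot L_n^{\#}(z) = L_n^{\#}(z),
\]
so $L_n^{\#}$ coincides identically with $(\gamma_n^{-1}\circ L_n)^{\#}$ for every~$n$.

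Second, I would argue that the assignment $f\mapsto f^{\#}$ is continuous from $\Hol(\D,\D)$ to $C(\D,\R)$, both carrying the compact-open topology. On the disc this is transparent from the explicit formula $f^{\#}(z)=|f'(z)|(1-|z|^2)/(1-|f(z)|^2)$: locally uniform convergence in $\Hol(\D,\D)$ forces locally uniform convergence of the derivatives (by Weierstrass's theorem), and hence of the distortions. Applying this continuity to $\gamma_n^{-1}\circ L_n\to h$ and substituting the previous identity yields $L_n^{\#}\to h^{\#}$, locally uniformly on $\D$.

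For uniqueness, suppose $\tilde h\in\Hol(\D,\D)$ also satisfies $L_n^{\#}\to \tilde h^{\#}$; then $h^{\#}\equiv \tilde h^{\#}$, and the task is to produce $\alpha\in\Aut(\D)$ with $\tilde h=\alpha\circ h$. If $h$ is constant then so is $\tilde h$ (since $f^{\#}\equiv 0$ characterises constants), and any automorphism sending the value of $h$ to that of $\tilde h$ will do, by transitivity of $\Aut(\D)$ on~$\D$. Otherwise, equality of distortions says that $h$ and $\tilde h$ pull the hyperbolic metric of~$\D$ back to the same conformal metric on~$\D$; near any non-critical point of~$h$, the locally defined composition $\tilde h\circ h^{-1}$ is then a hyperbolic isometry of~$\D$, and so agrees on an open set with some $\alpha\in\Aut(\D)$. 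Connectedness of $\D$ minus the discrete critical set of~$h$, together with the identity principle, extends $\alpha$ globally and yields $\tilde h=\alpha\circ h$ on all of $\D$. I expect this rigidity step to be the only delicate point; the first two paragraphs are essentially bookkeeping once Theorem~\ref{theoremA} is at hand, and the rigidity itself should either be available from the preparatory material in Section~\ref{section3} or follow from the short argument just sketched.
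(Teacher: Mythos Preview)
Your existence argument is exactly the paper's: apply Theorem~\ref{theoremA}, use the chain rule together with $\gamma^{\#}\equiv 1$ for automorphisms to get $L_n^{\#}=(\gamma_n^{-1}\circ L_n)^{\#}$, and then invoke continuity of $f\mapsto f^{\#}$ (Lemma~\ref{th:chain}(v) in the paper) to pass to the limit.

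Where you diverge from the paper is in the treatment of uniqueness. The paper does not prove a separate rigidity statement of the form ``$h^{\#}\equiv \tilde h^{\#}$ implies $\tilde h=\alpha\circ h$''. Instead, the $h$ in Corollary~\ref{corollaryB} is literally the $h$ produced by Theorem~\ref{theoremA}, and the uniqueness clause is simply inherited from the uniqueness clause of Theorem~\ref{theoremA}. So the paper's proof of the corollary is a two-line bookkeeping exercise, with no rigidity step at all.

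Your route establishes the formally stronger assertion that \emph{any} $\tilde h$ with $L_n^{\#}\to\tilde h^{\#}$ differs from $h$ by a post-composed automorphism. The argument you sketch is correct in outline, but the key step --- that a locally defined holomorphic map $\Phi$ on an open subset of $\D$ satisfying $\Phi^{*}\kappa=\kappa$ must be the restriction of an element of $\Aut(\D)$ --- is not available in Section~\ref{section3} (Lemma~\ref{th:chain}(iii) only treats globally defined maps). It is true, and can be proved for instance by normalising so that $\Phi(0)=0$, $\Phi'(0)=1$ and comparing power series in the identity $|\Phi'(z)|^2(1-|z|^2)^2=(1-|\Phi(z)|^2)^2$, but you would need to supply that argument. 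Once you have $\tilde h=\alpha\circ h$ on an open set, the identity principle finishes as you say. In short: your proof is sound but does more work than necessary, and the extra work is exactly the step you flagged as delicate.
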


Using hyperbolic distances gives us another perspective on Theorem~\ref{theoremA}. The automorphisms $\gamma_n$ are isometries, so we have $\omega\bigl(L_n(z),\gamma_n\bigl(h(z)\bigr)\bigr)\to 0$, for $z\in\D$; this shows us that $\{L_n\}$ behaves asymptotically like $\{\gamma_n\circ h\}$. Also, since $\omega\bigl(\gamma_n^{-1}\circ L_n(z),\gamma_n^{-1}\circ L_n(w)\bigr)=\omega\bigl(L_n(z),L_n(w)\bigr)$, we obtain the following corollary of Theorem~\ref{theoremA}.

\begin{restatable}{cor}{corollaryC}
\label{corollaryC}
For any  left iterated function system $L_n=f_n\circ f_{n-1}\circ \dots\circ f_1$ in $\Hol(\D,\D)$ there is a map $h\in\Hol(\D,\D)$, unique up to left composition by elements of $\Aut(\D)$, such that 
$\omega\bigl(L_n(z),L_n(w)\bigr)\to \omega\bigl(h(z),h(w)\bigr)$, for all $z$,~$w\in\D$.
\end{restatable}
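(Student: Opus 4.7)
The plan is to deduce the corollary directly from Theorem~\ref{theoremA}. For existence, I would apply Theorem~\ref{theoremA} to obtain a sequence $\{\gamma_n\}$ of automorphisms of $\D$ and a map $h\in\Hol(\D,\D)$ such that $\gamma_n^{-1}\circ L_n\to h$. Since every element of $\Aut(\D)$ is a hyperbolic isometry,
\[
\omega\bigl(L_n(z),L_n(w)\bigr)=\omega\bigl(\gamma_n^{-1}(L_n(z)),\gamma_n^{-1}(L_n(w))\bigr)
\]
for all $z,w\in\D$, and compact-open convergence combined with the joint continuity of $\omega$ gives $\omega(L_n(z),L_n(w))\to\omega(h(z),h(w))$.

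For uniqueness, I would suppose that $h_1,h_2\in\Hol(\D,\D)$ both satisfy the conclusion, so that $\omega(h_1(z),h_1(w))=\omega(h_2(z),h_2(w))$ for all $z,w\in\D$. If either map is constant then both are (vanishing of $\omega(h_i(z),h_i(w))$ characterises constants), and any $\alpha\in\Aut(\D)$ carrying the image of $h_2$ onto that of $h_1$ gives $h_1=\alpha\circ h_2$. Otherwise both are non-constant, so $h_2(\D)$ is open by the open mapping theorem, and I would define $\alpha\colon h_2(\D)\to h_1(\D)$ by $\alpha(h_2(z))=h_1(z)$; the distance equality makes this well defined and a hyperbolic isometry.

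The hard part will be extending $\alpha$ to an element of $\Aut(\D)$. After checking that $\alpha$ is holomorphic on $h_2(\D)$ (locally $\alpha=h_1\circ h_2^{-1}$ away from the discrete critical set of $h_2$, with critical values handled as removable singularities by continuity and boundedness), I would pick $a\in h_2(\D)$, use the relation $\alpha^\#(a)=1$ to select $M\in\Aut(\D)$ with $M(a)=\alpha(a)$ and $M'(a)=\alpha'(a)$, and reduce, after composing with suitable automorphisms to send $a$ to $0$, to showing that a holomorphic hyperbolic isometry $F$ defined on a neighbourhood of $0$ with $F(0)=0$ and $F'(0)=1$ must equal the identity. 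This final step follows from $|F(z)|=|z|$ (which encodes $\omega(F(z),0)=\omega(z,0)$) together with the factorisation $F(z)=zg(z)$ and the observation that a holomorphic function of unit modulus is a unimodular constant. The identity theorem then propagates $\alpha=M$ to the entire connected set $h_2(\D)$, yielding $h_1=M\circ h_2$ with $M\in\Aut(\D)$.
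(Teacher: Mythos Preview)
Your existence argument is exactly the paper's: apply Theorem~\ref{theoremA} to obtain $\gamma_n^{-1}\circ L_n\to h$, then use that automorphisms are $\omega$-isometries to pass to the limit in $\omega\bigl(L_n(z),L_n(w)\bigr)$.

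For uniqueness the paper takes a much lighter route than you do. It simply takes $h$ to be the map supplied by Theorem~\ref{theoremA}, and the phrase ``unique up to left composition by elements of $\Aut(\D)$'' is inherited verbatim from the uniqueness clause of Theorem~\ref{theoremA}; no separate argument is given. You instead prove the a~priori stronger statement that \emph{any} $h\in\Hol(\D,\D)$ satisfying $\omega\bigl(h(z),h(w)\bigr)=\lim_n\omega\bigl(L_n(z),L_n(w)\bigr)$ is determined up to left composition by $\Aut(\D)$. Your construction of the isometry $\alpha$ on $h_2(\D)$ and its extension to a disc automorphism is correct; one small clarification is that on $h_2(\D)$ the obstruction lies at the critical \emph{values} rather than the critical points, but each critical value is isolated within the image of a small disc about any one of its critical preimages, and this is enough for Riemann's removable-singularity theorem. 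What you gain is a genuinely intrinsic uniqueness statement; what the paper's approach gains is brevity. A shorter alternative to your uniqueness argument, should you want one: fix $w_0\in\D$, post-compose each $h_i$ by an automorphism sending $h_i(w_0)$ to~$0$, and observe that the hypothesis then reads $|h_1(z)|=|h_2(z)|$ for all~$z$; the ratio $h_1/h_2$ is holomorphic of unit modulus (zeros match in position and order), hence a unimodular constant.
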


Special cases of Corollaries~\ref{corollaryB} and~\ref{corollaryC} when all the maps $f_n$ are equal can be found in Heins's unpublished work \cite{He1991}*{Theorems~2 and~3}.

Corollary~\ref{corollaryC} has applications in transcendental dynamics. For example, in \cite{BeEvFaRiSt2022} Benini et al.\ consider the behaviour of the iterates $\{f^n\}$ of some transcendental entire function $f$ on a sequence $\{U_n\}$ of simply-connected wandering domains, where $f(U_{n-1})\subseteq U_n$. They study the quantity 
\[
c(z,z')= \lim_{n\to\infty} \omega_{U_n}\bigl(f^n(z),f^n(z')\bigr),\quad \text{for $z,z'\in U_0$}
\]
(see, in particular, \cite{BeEvFaRiSt2022}*{Theorem~A}). Let $\mu_n$ be a one-to-one conformal map from $U_n$ onto \edit{$U_0$}, and let $f_n=\mu_n\circ f\circ \mu_{n-1}^{-1}$ (with $\mu_0$ the identity map). Then $L_n=f_n\circ f_{n-1}\circ \dots\circ f_1$ is a left iterated function system in \edit{$\Hol(U_0,U_0)$} and $L_n=\mu_n\circ f^n$. Corollary~\ref{corollaryC} \edit{(with $U_0$ in place of $\mathbb{D}$)} tells us that there is \edit{$h\in \Hol(U_0,U_0)$} with $\omega\bigl(L_n(z),L_n(z')\bigr)\to \omega\bigl(h(z),h(z')\bigr)$; hence
\[
c(z,z')=\omega\bigl(h(z),h(z')\bigr)\;.
\]
That is, the limiting quantity $c(z,z')$ studied in \cite{BeEvFaRiSt2022} and subsequent works such as \cites{BeEvFaRiSt2024,Fe2022}  is realised by some holomorphic function $h$, and whether $c$ is zero or not zero corresponds to whether $h$ is a constant function or otherwise. We shall  discuss this further in Sections~\ref{section4} and~\ref{section5}.

Corollary~\ref{corollaryC} also has applications to the theory of iteration of a single holomorphic self-map of $\D$. Given $f\in\Hol(\D,\D)$, we observe that $L_n=f^n$ is a left iterated function system (and a right iterated function system), so we can find $h\in \Hol(\D,\D)$ with 
\[
\omega\bigl(f^{n}(z),f^{n+1}(z)\bigr)=\omega\bigl(f^{n}(z),f^{n}\bigl(f(z)\bigr)\bigr)\to \omega\bigl(h(z),h\bigl(f(z)\bigr)\bigr)\;.
\]
Now, the limiting value of the sequence with $n$-th term $\omega\bigl(f^{n}(z),f^{n+1}(z)\bigr)$ is called the \emph{hyperbolic step} of $f$ at $z$, and the theory of this quantity is explored in \cite{Ab2022}*{Section 4.6} (see also references therein). Corollary~\ref{corollaryC} demonstrates that the hyperbolic step can be realised explicitly using the holomorphic map $h$, and whether the hyperbolic step is zero or not zero corresponds to whether or not $h$ is constant, as we shall see in detail in Section~\ref{section4}. 

We include a third and final corollary of Theorem~\ref{theoremA}. Given maps $f,g \in\Hol(X,X)$ we say that $f$ is \emph{left semiconjugate} in $\Hol(X,X)$ to $g$ if there is a nonconstant map $\phi\in \Hol(X,X)$ with $\phi\circ f=g\circ \phi$. In the following statement we denote by $h$ the limit function of the sequence $L_n=f^n$ from Theorem~\ref{theoremA}.

\begin{restatable}{cor}{corollaryD}
\label{corollaryD}
A holomorphic map $f\in\Hol(\D,\D)$ is left semiconjugate in $\Hol(\D,\D)$ to a conformal automorphism of $\D$ if and only if $h$ is not constant.
\end{restatable}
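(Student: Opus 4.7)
The plan is to prove the two directions separately, using Corollary~\ref{corollaryC} for the forward implication and a compactness argument in $\Aut(\D)$ for the backward one.

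For the forward direction, suppose $g\circ\phi=\phi\circ f$ for some nonconstant $\phi\in\Hol(\D,\D)$ and some $g\in\Aut(\D)$. Iterating gives $g^n\circ\phi=\phi\circ f^n$, and since $g$ is an isometry of $\omega$ and $\phi$ a semicontraction, this yields
\[
\omega\bigl(\phi(z),\phi(w)\bigr)=\omega\bigl(\phi(f^n(z)),\phi(f^n(w))\bigr)\leq\omega\bigl(f^n(z),f^n(w)\bigr).
\]
Corollary~\ref{corollaryC} lets me pass to the limit on the right to obtain $\omega(\phi(z),\phi(w))\leq\omega(h(z),h(w))$, and nonconstancy of $\phi$ then forces the same for $h$.

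Conversely, assume $h$ is nonconstant. Then $f$ must be nonconstant, since otherwise every $f^n$ with $n\geq 1$ would be constant and hence $h$ would be too, so $h\circ f$ is also nonconstant. Setting $k_n=\gamma_n^{-1}\circ f^{n+1}$, the convergence $\gamma_n^{-1}\circ f^n\to h$ supplied by Theorem~\ref{theoremA} yields
\[
k_n=(\gamma_n^{-1}\circ f^n)\circ f\to h\circ f\qquad\text{while}\qquad \alpha_n\circ k_n=\gamma_{n+1}^{-1}\circ f^{n+1}\to h,
\]
where $\alpha_n=\gamma_{n+1}^{-1}\circ\gamma_n\in\Aut(\D)$. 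The family $\{\alpha_n\}$ is normal by Montel, and a subsequential limit $\alpha$ is either an automorphism of $\D$ or a constant map with value in $\partial\D$, by the standard dichotomy for limits of conformal automorphisms of $\D$. The second alternative is excluded: for any $z_0\in\D$, $k_n(z_0)$ lies eventually in a compact subset of $\D$, so uniform convergence of $\alpha_n$ to a boundary constant on that compactum would force $\alpha_n(k_n(z_0))$ to approach $\partial\D$, contradicting $\alpha_n(k_n(z_0))\to h(z_0)\in\D$. Therefore $\alpha\in\Aut(\D)$, and passing to the limit in $\alpha_n\circ k_n\to h$ gives $\alpha\circ h\circ f=h$, equivalently $h\circ f=\alpha^{-1}\circ h$. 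Thus $f$ is left semiconjugate in $\Hol(\D,\D)$ to $\alpha^{-1}\in\Aut(\D)$ via the nonconstant map $\phi=h$.

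The main obstacle is the last step, where I must rule out boundary constants as subsequential limits of $\{\alpha_n\}$; this works precisely because nonconstancy of $h$ guarantees interior values for $h$, and without this input the argument would collapse.
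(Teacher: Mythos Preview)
Your proof is correct and follows essentially the same route as the paper's: the direction ``semiconjugate $\Rightarrow$ $h$ nonconstant'' is identical, and for the converse both you and the paper introduce the automorphisms $\gamma_{n+1}^{-1}\circ\gamma_n$ (or their inverses), extract a convergent subsequence, and read off the semiconjugacy $h\circ f=\phi\circ h$. The only technical difference is that the paper obtains relative compactness of $\{\alpha_n\}$ inside $\Aut(\D)$ directly via the hyperbolic metric (Theorem~\ref{theorem1}, using that $\alpha_n$ sends a fixed point near $h(0)$), whereas you invoke Montel and then rule out boundary constants; both are standard and equivalent here.

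One small correction to your closing commentary: ruling out the boundary-constant limit does \emph{not} use the nonconstancy of $h$, only that $h(z_0)\in\D$, which holds for any $h\in\Hol(\D,\D)$. Where nonconstancy of $h$ is actually needed is at the very last line, so that $\phi=h$ qualifies as a semiconjugacy under the definition.
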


Corollary~\ref{corollaryD} is inspired by the unpublished result \cite{He1991}*{Theorem~4}. Theorem~\ref{theoremA} and Corollary~\ref{corollaryD} will be proved in Section~\ref{section4}.

Notice that Theorem~\ref{theoremA} fails for hyperbolic Riemann surfaces in general. To see this, consider any hyperbolic Riemann surface with trivial automorphism group and choose the maps $f_n$ to be any constant functions such that $\{L_n\}$ does not converge in $\Hol(X,X)$. With only a little more care one can find similar examples in which none of the $f_n$ are constant functions.

We move on to consider further convergence criteria for left iterated function systems. Theorem~2.1 of \cite{BeEvFaRiSt2022} --- which was generalised by Ferreira in \cite{Fe2023}*{Theorem~1.1} --- says that with the hypotheses that each function $f_n\in\Hol(\D,\D)$ fixes $0$ and that $L_n=f_n\circ f_{n-1}\circ \dots\circ  f_1$ converges in $\Hol(\D,\D)$ to some function $F$, we have that $F$ is nonconstant if and only if $\sum (1-|f_n'(0)|)<+\infty$. (Ferreira's theorem also shows that if the $f_n$ are inner functions and $F$ is not constant then it also is an inner function; we will not go into this here.) Our next theorem generalises these results.

\begin{restatable}{thm}{theoremE}
\label{theoremE}
Let $X$ be a hyperbolic Riemann surface and let $L_n=f_n\circ f_{n-1}\circ \dots\circ f_1$ be a left iterated function system that is relatively compact in $\Hol(X,X)$. Suppose that the maps $f_n$ are nonconstant. Then the following statements are equivalent.
\begin{enumerate}
\item All limit points of $\{L_n\}$ in $\Hol(X,X)$ are nonconstant.
\item The sequence $\{L_n\}$ has a nonconstant limit point in $\Hol(X,X)$.
\item There exists $z_0\in X$ with $\sum_n \bigl(1-f_n^\#(z_0)\bigr)<+\infty$.
\item For all $z\in X$ we have $\sum_n \bigl(1-f_n^\#(z)\bigr)<+\infty$.
\end{enumerate}
\end{restatable}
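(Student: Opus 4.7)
The proof rests on the chain rule for hyperbolic distortion,
\[
L_n^\#(z) \;=\; \prod_{k=1}^{n} f_k^\#\bigl(L_{k-1}(z)\bigr),
\]
in which each factor lies in $[0,1]$ by Schwarz--Pick. Consequently $n \mapsto L_n^\#(z)$ is monotonically decreasing, so $\ell(z):=\lim_n L_n^\#(z)$ exists in $[0,1]$ for every $z \in X$. Relative compactness of $\{L_n\}$ ensures that every orbit $\{L_n(z)\}$ lies in a compact subset of $X$, and it promotes $L_{n_k} \to h$ to $L_{n_k}^\# \to h^\#$ locally uniformly (by the Weierstrass convergence theorem applied in local charts); hence $h^\#(z) = \ell(z)$ for every limit point $h$ of $\{L_n\}$ and every $z \in X$. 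Since $h \in \Hol(X,X)$ is constant if and only if $h^\# \equiv 0$, all limit points are nonconstant if and only if at least one of them is, if and only if $\ell \not\equiv 0$. This already delivers (i) $\iff$ (ii), and both are equivalent to the existence of some $z \in X$ with $\ell(z) > 0$, i.e.\ with $\sum_k \bigl(1 - f_k^\#(L_{k-1}(z))\bigr) < +\infty$ and no factor in the product vanishing.

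It remains to exchange the moving base-point $L_{k-1}(z)$ for the fixed base-points $z_0$ or $z$ of conditions (iii) and (iv). This is the heart of the matter, and it is delivered by the following comparison estimate: for every compact $K \subset X$ there is a constant $C_K > 0$ such that
\[
1 - f^\#(z) \;\leq\; C_K\,\bigl(1 - f^\#(w)\bigr), \qquad z,w \in K,\ f \in \Hol(X,X).
\]
Such an inequality is a consequence of the hyperbolic distortion estimate generalising Bracci--Kraus--Roth that we prove elsewhere in the paper. Because the orbit $\{L_{k-1}(z)\}$ is confined to a compact subset of $X$ by relative compactness, applying the comparison to a compact set containing $z_0$, $z$, and the orbit shows that the three series
\[
\sum_k \bigl(1 - f_k^\#\bigl(L_{k-1}(z)\bigr)\bigr), \quad \sum_k \bigl(1 - f_k^\#(z_0)\bigr), \quad \sum_k \bigl(1 - f_k^\#(z)\bigr)
\]
are simultaneously finite for any choice of $z_0, z \in X$. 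The nonvanishing caveat above is automatic off a countable exceptional set carved out by the critical values of the $f_k$, which can be avoided in (ii)$\Leftrightarrow$(iii) by choosing a generic $z$.

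The main obstacle is therefore the comparison estimate for $1-f^\#$; once it is secured, the argument is a routine combination of the chain rule for hyperbolic distortion, the Schwarz--Pick lemma, and the Weierstrass convergence theorem.
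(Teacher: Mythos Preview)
Your proof is correct and takes essentially the same approach as the paper: the chain rule gives $L_n^\#(z)=\prod_k f_k^\#\bigl(L_{k-1}(z)\bigr)$, monotonicity yields a common limit $\ell(z)=h^\#(z)$ for every limit point~$h$, and the comparison estimate $1-f^\#(z)\leq C_K\bigl(1-f^\#(w)\bigr)$ on compacta (Corollaries~\ref{corollary1} and~\ref{th:3.6}) swaps the moving base-point $L_{k-1}(z)$ for a fixed one. Your generic-$z$ argument to rule out vanishing factors $f_k^\#\bigl(L_{k-1}(z)\bigr)=0$ is in fact more explicit than the paper's own handling of this issue in its (iv)$\Rightarrow$(i) step.
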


This is more general than \cite{BeEvFaRiSt2022}*{Theorem~2.1} in that it applies to all hyperbolic Riemann surfaces, the assumption that the maps $f_n$ fix $0$ has been weakened (since that assumption implies that $\{L_n\}$ is relatively compact), and it is no longer assumed that $\{L_n\}$ converges. Theorem~\ref{theoremE} will be proved in Section~\ref{section5}. \edit{Then in Section~\ref{section6} we prove the following inequality (and its corollary).}

\begin{restatable}{thm}{theoremF}
\label{theoremF}
Let $f\in \Hol(\D,\D)$ and let $w\in\D$. Then there exists  $\gamma\in \Aut(\D)$ such that
\[
\omega\bigl(f(z),\gamma(z)\bigr)\leq 2e^{4\omega(z,w)}\bigl(1-f^\#(w)\bigr),
\]
for all $z\in\D$.
\end{restatable}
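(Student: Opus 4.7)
The plan is to use conformal invariance to reduce to the normalised case $w=0$, $f(0)=0$, $f'(0)=\lambda$ with $\lambda:=f^\#(w)\in[0,1]$, and to choose $\gamma$ so that it becomes the identity in this reduced picture. Concretely, I set $F=\psi_{f(w)}\circ f\circ\psi_w^{-1}$ with $\psi_a(z)=(z-a)/(1-\bar a z)$, and (after multiplying by a rotation to make $F'(0)$ real and non-negative) the task becomes that of bounding $\omega(F(u),u)$ in terms of $\lambda$ and $R:=\omega(0,u)=\omega(z,w)$. The automorphism $\gamma$ in the original coordinates is then the unique $\gamma\in\Aut(\D)$ with $\gamma(w)=f(w)$ and $\arg\gamma'(w)=\arg f'(w)$ (with any such automorphism sending $w$ to $f(w)$ working if $f'(w)=0$).

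The main computation is to pin down the pseudohyperbolic distance between $F(u)$ and $u$. Since $F(0)=0$, Schwarz's lemma gives $F(u)=u\,h(u)$ for some $h\in\Hol(\D,\overline{\D})$ with $h(0)=\lambda$; a second application of Schwarz, to $\eta:=(h-\lambda)/(1-\lambda h)$, produces a function with $\eta(0)=0$ and hence $|\eta(u)|\le|u|$. Writing $h=(\lambda+\eta)/(1+\lambda\eta)$, a direct calculation should yield
\[
\frac{F(u)-u}{1-\bar u\, F(u)} \;=\; \frac{-u(1-\lambda)(1-\eta(u))}{(1-\lambda|u|^2)+\eta(u)(\lambda-|u|^2)}.
\]
Bounding the numerator modulus by $|u|(1-\lambda)(1+|u|)$ and, using the elementary fact $|\lambda-|u|^2|\le 1-\lambda|u|^2$ (which rearranges to $(1+\lambda)(1-|u|^2)\ge 0$), bounding the denominator modulus below by $(1-\lambda|u|^2)(1-|u|)\ge (1+|u|)(1-|u|)^2$, I expect to obtain the pseudohyperbolic estimate
\[
\tanh\bigl(\omega(F(u),u)/2\bigr) \;\le\; \frac{|u|(1-\lambda)}{(1-|u|)^2} \;=\; (1-\lambda)\,\frac{e^{2R}-1}{4},
\]
where the last equality is a short manipulation of $|u|=\tanh(R/2)$.

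To finish, I would convert this pseudohyperbolic bound to a hyperbolic one by splitting into two cases. When $(1-\lambda)(e^{2R}-1)/4\le 1/2$, the estimate $\operatorname{arctanh}(y)\le 4y/3$ on $[0,1/2]$ gives $\omega(F(u),u)\le \tfrac{2}{3}(1-\lambda)(e^{2R}-1)\le 2(1-\lambda)e^{4R}$. When instead $(1-\lambda)(e^{2R}-1)>2$, I combine the trivial bound $\omega(F(u),u)\le 2R$ (from Schwarz--Pick applied at $0$ together with the triangle inequality) with the elementary one-variable inequality $R(e^{2R}-1)\le 2e^{4R}$ for $R\ge 0$, which follows via the substitution $s=e^{2R}$ from the tangent estimate $\ln s\le s-1$ together with $(s-1)^2\le 4s^2$ on $[1,\infty)$.

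The main obstacle I anticipate is precisely this final case analysis. Because $\operatorname{arctanh}$ is non-linear near the boundary of the disc, the clean pseudohyperbolic estimate does not translate directly into a hyperbolic one of the same shape, and the near-origin and near-boundary regimes must be handled separately. The constants $2$ and $4$ appearing in the theorem statement are tight enough that one must check carefully that both regimes fit under $2(1-f^\#(w))e^{4\omega(z,w)}$.
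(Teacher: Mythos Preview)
Your reduction and the Schwarz factorisation $F(u)=uh(u)$, followed by a second application of Schwarz to $\eta=(h-\lambda)/(1-\lambda h)$, mirror the paper's set-up, and the pseudohyperbolic identity you derive is correct. The gap is in the final conversion. First, a normalisation issue: the paper's $\omega$ has $\tanh\omega$ equal to the pseudohyperbolic distance (not $\tanh(\omega/2)$) and $|u|=\tanh R$; in the paper's metric your bound therefore reads $\tanh\omega(F(u),u)\le(1-\lambda)(e^{4R}-1)/4$. With this correction your Case~B would require the one-variable inequality $R(e^{4R}-1)\le 2e^{4R}$, which fails once $R>2$. Concretely, for $R=5$ and $(1-\lambda)$ chosen so that $y=|u|(1-\lambda)/(1-|u|)^2$ sits just below~$1$, both of your branches give bounds (about $\operatorname{arctanh}(y)\approx 9$ and $2R=10$) that exceed the target $2(1-\lambda)e^{4R}\approx 8$. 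Your instinct that ``this final case analysis'' is the obstacle is right, but a pseudohyperbolic ($\tanh$) bound is too coarse near the boundary to recover the hyperbolic inequality with the constants in the statement.

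The paper bypasses the case analysis entirely by bounding $\sinh\omega$ rather than $\tanh\omega$: since $\omega\le\sinh\omega$ for all $\omega\ge 0$, a linear estimate on $\sinh\omega$ transfers directly. After writing $f(z)=zg(z)$ and taking $\gamma(z)=\alpha z$ with $\alpha=g(0)/|g(0)|$, one gets $\sinh\omega(f(z),\gamma(z))\le|g(z)-\alpha|/(1-|z|^2)$, and then controls $|g(z)-g(0)|$ via the Schwarz--Pick consequence $|g(z)-g(0)|\le 2(1-|g(0)|)\sinh\bigl(2\omega(z,0)\bigr)$ (the paper's Lemma~\ref{lemmaB}). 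Your own computation adapts to this route immediately: using $1-|F(u)|^2\ge 1-|u|^2$ and $1-\lambda|u|\ge 1-|u|$, your numerator estimate already yields $\sinh\omega(F(u),u)\le |u|(1-\lambda)/(1-|u|)^2=(1-\lambda)(e^{4R}-1)/4$, and the theorem follows at once without any case split.
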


A corollary of Theorem~\ref{theoremF} is the following recent theorem of Bracci, Kraus, and Roth \cite{BrKrRo2023}*{Theorem~2.1}.

\begin{restatable}{cor}{corollaryG}
\label{corollaryG}
Let $f\in\Hol(\D,\D)$ be such that
\[
f^\#(z_n) = 1+o\bigl((1-|z_n|)^2\bigr)
\]
for some sequence $\{z_n\}$ in $\D$ with $|z_n|\to 1$. Then $f\in \Aut(\D)$ and hence $f^\#(z)=1$ for $z\in\D$.
\end{restatable}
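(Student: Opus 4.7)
The plan is to apply Theorem~\ref{theoremF} with $w = z_n$ for each $n$, obtaining a sequence $\{\gamma_n\}$ in $\Aut(\D)$ with
\[
\omega\bigl(f(z), \gamma_n(z)\bigr) \leq 2 e^{4\omega(z, z_n)} \bigl(1 - f^\#(z_n)\bigr)
\]
for all $z \in \D$, and then to show that the right-hand side tends to zero locally uniformly in $z$. The key ingredient is the estimate $e^{4\omega(z, z_n)} = O\bigl((1-|z_n|)^{-2}\bigr)$, uniform as $z$ ranges over a fixed compact subset $K \subseteq \D$. This will follow from $e^{2\omega(z,w)} = (1+\rho(z,w))/(1-\rho(z,w))$ together with the identity
\[
1-\rho(z,w)^2 = \frac{(1-|z|^2)(1-|w|^2)}{|1-\bar z w|^2}
\]
for the pseudo-hyperbolic distance $\rho$: on $K$ the factor $1-|z|^2$ is bounded below by a positive constant, while $|1-\bar z z_n|$ is automatically bounded above by $2$, which together give the required control.

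Combined with the hypothesis $1 - f^\#(z_n) = o\bigl((1-|z_n|)^2\bigr)$, this yields $\omega(f(z), \gamma_n(z)) \to 0$ uniformly on compact subsets of $\D$, so $\gamma_n \to f$ in $\Hol(\D,\D)$. Since the hyperbolic distortion of a constant map is identically zero and $f^\#(z_n) \to 1$, the map $f$ is nonconstant, so $f(0) \in \D$ and the convergence $\gamma_n(0) \to f(0)$ keeps $\{\gamma_n(0)\}$ eventually bounded away from $\partial \D$.

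Writing each automorphism in the form $\gamma_n(z) = e^{i\theta_n}(z-a_n)/(1-\bar{a_n} z)$, so that $|\gamma_n(0)| = |a_n|$, the sequence $\{(a_n, e^{i\theta_n})\}$ eventually lies in a compact subset of $\D \times S^1$. Hence some subsequence of $\{\gamma_n\}$ converges in $\Hol(\D,\D)$ to an automorphism $\gamma$, and by uniqueness of limits $f = \gamma \in \Aut(\D)$; the assertion $f^\# \equiv 1$ then follows because automorphisms of $\D$ are hyperbolic isometries. The only real obstacle in this argument is the bookkeeping required for the key estimate on $e^{4\omega(z, z_n)}$; once that is in hand, the remainder is a routine normal-families and compactness application.
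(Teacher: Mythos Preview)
Your proof is correct and follows essentially the same route as the paper's: apply Theorem~\ref{theoremF} with $w=z_n$, bound $e^{4\omega(z,z_n)}$ by a constant times $(1-|z_n|)^{-2}$ uniformly on compacta, and conclude $\gamma_n\to f$. The only cosmetic differences are that the paper obtains the bound via the triangle inequality $\omega(z,z_n)\le\omega(z,0)+\omega(0,z_n)$ and the explicit formula for $\omega(0,\cdot)$ (arriving at the clean constant $32/\bigl((1-|z|)^2(1-|z_n|)^2\bigr)$), and that the paper finishes in one line by invoking the fact, stated in the introduction, that $\Aut(\D)$ is closed in $\Hol(\D,\D)$, whereas you reprove this closedness by hand via the parametrisation of $\Aut(\D)$.
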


Bracci, Kraus, and Roth use this result to prove the Burns--Krantz theorem and as a first step to other results; see \cite{BrKrRo2023} itself or \cite{Ab2022}*{Section~2.7}. 

Our final pair of theorems on left iterated functions systems together generalise \cite{AbCh2022}*{Theorem~1.5} by the first author and Christodoulou. To state that theorem, we recall that a \emph{Bloch domain} $\Omega$ in a hyperbolic Riemann surface $X$ is a subdomain of $X$ with the property that there is a uniform bound on the radii of any hyperbolic discs in $X$ that lie in $\Omega$. The first author and Christodoulou proved that if $f_n$ are holomorphic maps from $X$ into a Bloch domain $\Omega$, and if $a_n$ is the unique fixed point of $f_n$ in $\Omega$, then the left iterated function system $L_n=f_n\circ f_{n-1}\circ \dots \circ f_1$ converges to a constant $a$ in $X$ if and only if $a_n\to a$. (There is another lesser part to \cite{AbCh2022}*{Theorem~1.5} which we will not discuss.) 

For the next theorem, recall that $\id_X$ denotes the identity map in $\Hol(X,X)$ (and $\overline{\mathscr{F}}$ is the closure of $\mathscr{F}$ in $\Hol(X,X)$).

\begin{restatable}{thm}{theoremH}
\label{theoremH}
Let $X$ be a hyperbolic Riemann surface and let $\mathscr{F}$ be a subfamily of $\Hol(X,X)$ for which $\id_X\notin \overline{\mathscr{F}}$. Suppose that the left iterated function system $L_n=f_n\circ f_{n-1}\circ \dots \circ f_1$, where $f_n\in \mathscr{F}$, converges on $X$ to a constant $a$ in $X$. Then, for sufficiently large $n$, the map $f_n$ has a fixed point $a_n\in X$, and $a_n\to a$.
\end{restatable}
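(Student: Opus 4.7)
The plan is to argue by contradiction: suppose there exist $\varepsilon > 0$ and a subsequence $\{f_{n_k}\}$ such that no $f_{n_k}$ has a fixed point in the closed hyperbolic ball $\overline{B_{\omega_X}(a, \varepsilon)}$.

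First I would observe that $f_n(a) \to a$. Since each $f_n$ is a semicontraction of $(X, \omega_X)$ by the Schwarz--Pick lemma, and both $L_n \to a$ and $L_{n-1} \to a$, the triangle inequality together with the identity $f_n(L_{n-1}(a)) = L_n(a)$ gives
\[
\omega_X(f_n(a), a) \leq \omega_X\bigl(f_n(a), f_n(L_{n-1}(a))\bigr) + \omega_X(L_n(a), a) \leq \omega_X(a, L_{n-1}(a)) + \omega_X(L_n(a), a) \to 0.
\]
Hence the orbit $\{f_n(a)\}$ is precompact in $X$, and by Arzel\`a--Ascoli applied to the equicontinuous family of semicontractions $\{f_n\}$, this sequence is relatively compact in $\Hol(X, X)$. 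Extracting a sub-subsequence, $f_{n_{k_j}} \to g$ in $\Hol(X, X)$ for some $g \in \overline{\mathscr{F}}$ with $g(a) = a$; and the hypothesis $\id_X \notin \overline{\mathscr{F}}$ forces $g \neq \id_X$.

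Then I would apply Hurwitz's theorem in a local chart. Choose a holomorphic chart $(U, \phi)$ around $a$ with $\phi(a) = 0$, and set $\tilde g = \phi \circ g \circ \phi^{-1}$ on a neighbourhood of $0$. Because $g \neq \id_X$ and $X$ is connected, analytic continuation ensures that $\tilde g(z) - z$ is not identically zero near $0$, so $0$ is an isolated zero. Choose $r > 0$ small enough that $\phi^{-1}(\{|z| \leq r\}) \subset B_{\omega_X}(a, \varepsilon)$ and $\tilde g(z) - z$ is nonzero on $\{|z| = r\}$. Writing $\tilde f_n = \phi \circ f_n \circ \phi^{-1}$, the local uniform convergence $\tilde f_{n_{k_j}} \to \tilde g$ together with Hurwitz's theorem yields a zero of $\tilde f_{n_{k_j}}(z) - z$ in $\{|z| < r\}$ for all sufficiently large $j$, giving a fixed point of $f_{n_{k_j}}$ in $\phi^{-1}(\{|z| < r\}) \subset B_{\omega_X}(a, \varepsilon)$ and thereby contradicting the choice of subsequence. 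The existence of fixed points $a_n \to a$ then follows by letting $\varepsilon \to 0$ (for each $n$ choose, say, a fixed point of $f_n$ closest to $a$). The main subtle point is ensuring the candidate limit $g$ is not the identity, which is exactly where the hypothesis $\id_X \notin \overline{\mathscr{F}}$ is used; once this is in hand, Hurwitz's theorem does the rest without any case split on whether $g$ is or is not an automorphism of $X$.
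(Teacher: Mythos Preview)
Your proof is correct and follows essentially the same approach as the paper's: both begin with the triangle-inequality estimate showing $f_n(a)\to a$, extract a convergent subsequence with limit $g$ fixing $a$, use the hypothesis $\id_X\notin\overline{\mathscr{F}}$ to ensure $g\neq\id_X$, and then apply Hurwitz's theorem in a local chart to produce nearby fixed points. The only difference is organizational: the paper isolates the Hurwitz step as a separate lemma (continuous dependence of fixed points) and splits the argument into ``eventually fixed points exist'' followed by ``the fixed points converge to $a$'', whereas you merge these into a single contradiction keyed to a fixed $\varepsilon$-ball, which is slightly more economical.
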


Theorem~\ref{theoremH} is a significant generalisation of one part of \cite{AbCh2022}*{Theorem~1.5}, because the strong assumption that $f_n$ maps $X$ into a Bloch domain has been replaced with the mild assumption that $\id_X\notin \overline{\mathscr{F}}$. The converse implication does not hold under such mild hypotheses; instead we have the following theorem. In this theorem we refer to an automorphism $f$ of $X$ as \emph{pseudoperiodic} if it is not periodic and the identity map $\id_X$ is a limit point of the sequence $\{f^n\}$.

\begin{restatable}{thm}{theoremI}
\label{theoremI}
Let $X$ be a hyperbolic Riemann surface and let $\mathscr{F}$ be a subfamily of $\Hol(X,X)$ for which $\overline{\mathscr{F}}$ does not contain any periodic or pseudoperiodic automorphisms. Suppose that the left iterated function system $L_n=f_n\circ f_{n-1}\circ \dots \circ f_1$, where $f_n\in \mathscr{F}$, is relatively compact in $\Hol(X,X)$.  Suppose also that each map $f_n$ has a fixed point $a_n\in X$ and that $a_n\to a\in X$.  Then $\{L_n\}$ converges to the constant map with value~$a$.
\end{restatable}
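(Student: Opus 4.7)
The plan is, using the relative compactness of $\{L_n\}$ in $\Hol(X,X)$, to reduce the theorem to showing that every convergent subsequence of $\{L_n\}$ has limit equal to the constant map $a$. This proceeds in two stages.

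\emph{Stage one: every subsequential limit is constant.} Suppose $L_{n_k}\to F$ with $F$ nonconstant. The Schwarz--Pick lemma makes $\omega_X(L_n(z),L_n(w))$ non-increasing in $n$, so it converges to $c(z,w)=\omega_X(F(z),F(w))$, which is positive for some $z,w\in X$. Because each $f_n$ is a semicontraction with $f_n(a_n)=a_n$ and $a_n\to a$, the family $\{f_{n_k+1}\}$ is pointwise bounded in $X$ and equicontinuous, hence relatively compact in $\Hol(X,X)$; passing to a sub-subsequence gives $f_{n_k+1}\to g\in \overline{\mathscr{F}}$, whence $L_{n_k+1}=f_{n_k+1}\circ L_{n_k}\to g\circ F$. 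The two subsequential limits of $\omega_X(L_n(z),L_n(w))$ must agree, forcing $\omega_X(g(F(z)),g(F(w)))=\omega_X(F(z),F(w))$. Choosing $z,w$ with $F(z)\neq F(w)$ and applying Schwarz--Pick rigidity places $g\in\Aut(X)$, while the fixed-point relation $f_{n_k+1}(a_{n_k+1})=a_{n_k+1}$ yields $g(a)=a$. The stabiliser $\Aut(X)_a$ is compact (it is closed in $\Hol(X,X)$, pointwise bounded and equicontinuous), so every non-identity element is either of finite order or has $\id$ as a limit of its positive iterates (by the classical fact that a closed subsemigroup of a compact group is a subgroup); this, together with the fact that $\id$ itself is periodic, contradicts the hypothesis on $\overline{\mathscr{F}}$.

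\emph{Stage two: the constant limit equals $a$.} Fix $z_0\in X$ and let $K^*$ be the set of limit points in $X$ of $\{L_n(z_0)\}$; by the relative compactness of $\{L_n\}$, $K^*$ is a compact subset of $X$. Let $\rho^*=\max\{\omega_X(c,a):c\in K^*\}$, attained at some $c^*\in K^*$, and assume for contradiction that $\rho^*>0$. Extract a subsequence with $L_{n_k}\to c^*$ (constant, by Stage one), then, using the same relative-compactness arguments, pass to further subsequences so that $L_{n_k-1}\to c''\in K^*$ and $f_{n_k}\to g\in \overline{\mathscr{F}}$. Then $c^*=g(c'')$ and $g(a)=a$, and
\[
\rho^*=\omega_X(c^*,a)=\omega_X\bigl(g(c''),g(a)\bigr)\leq \omega_X(c'',a)\leq \rho^*,
\]
so $\omega_X(c'',a)=\rho^*>0$ and $g$ is isometric on the pair $(c'',a)$. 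Schwarz--Pick rigidity then places $g\in\Aut(X)_a\cap\overline{\mathscr{F}}$, and the argument of Stage one gives the same contradiction. Hence $\rho^*=0$, so $K^*=\{a\}$ and $L_n(z_0)\to a$. Stage one also yields $\omega_X(L_n(z),L_n(z_0))\to 0$ for every $z\in X$, so $L_n\to a$ pointwise, and the equicontinuity of the $L_n$ upgrades this to uniform convergence on compact subsets of $X$.

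\emph{Main obstacle.} The subtle step is Stage two: the forward monotonicity $\omega_X(L_{n+1}(z_0),a_{n+1})\leq \omega_X(L_n(z_0),a_{n+1})$ alone does not force the distances to $a$ to decay. Looking instead at an accumulation point $c^*$ that maximises the distance to $a$ and examining the step $L_{n_k-1}\to L_{n_k}$ converts weak monotonicity into the equality $\omega_X(c'',a)=\omega_X(c^*,a)$, which is exactly what is needed to apply Schwarz--Pick rigidity and produce an element of $\overline{\mathscr{F}}\cap\Aut(X)_a$ contradicting the periodic/pseudoperiodic hypothesis.
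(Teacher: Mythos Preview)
Your proof is correct and takes a genuinely different route from the paper's. The paper first extracts a \emph{uniform} contraction constant: it shows that $\sup_n\sup_{z\in K}f_n^\#(z)\le\ell<1$ on a compact set $K$ containing both the orbit $\{L_n(z_0)\}$ and the fixed points $\{a_n\}$ (the failure of this bound would produce, just as in your argument, a self-covering in $\overline{\mathscr{F}}$ fixing~$a$, hence by \cite{Ab2022}*{Corollary~3.1.15} a periodic or pseudoperiodic automorphism), and then runs an explicit iterative estimate on $s_n=\omega_X\bigl(L_n(z_0),a_n\bigr)$ to force $s_n\to 0$. You work instead entirely at the level of subsequential limits: Stage one uses the monotonicity of $\omega_X\bigl(L_n(z),L_n(w)\bigr)$ to force any limit of $\{f_{n_k+1}\}$ to be isometric on a pair of points, and Stage two selects the accumulation point $c^*$ of $\{L_n(z_0)\}$ at maximal distance from~$a$ and shows that the ``last map'' $f_{n_k}$ must again limit to such an isometry. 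Both proofs ultimately invoke the same rigidity-plus-fixed-point contradiction, but the paper's version delivers an essentially geometric rate of convergence, whereas yours is softer and sidesteps any quantitative contraction estimate.

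One small point of precision: on a general hyperbolic Riemann surface, equality in the Schwarz--Pick inequality for a single pair of points gives only that $g$ is a self-\emph{covering}, not immediately that $g\in\Aut(X)$. The passage from covering to automorphism uses the fixed point $g(a)=a$ and is exactly the content of \cite{Ab2022}*{Corollary~3.1.15} invoked in the paper; your compact-stabiliser argument essentially reproves this (the closure of $\{g^n\}$ is a compact semigroup of coverings fixing~$a$, which forces $g$ to be injective), but it would be cleaner to make the intermediate ``covering'' step explicit before appealing to $\Aut(X)_a$.
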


This time the Bloch domain condition has been replaced with the weaker assumptions that $\overline{\mathscr{F}}$ contains no periodic or pseudoperiodic automorphisms and that $\{L_n\}$ is relatively compact in $\Hol(X,X)$. To see that these truly are weaker assumptions, \edit{we recall from \cite{BeCaMiNg2004} that a consequence of the Bloch domain condition is that there exists $\ell\in (0,1)$ with $\omega_X\bigl(f_n(z),f_n(w)\bigr)\leq \ell \omega_X(z,w)$, for all $z,w\in X$ and $n\in\mathbb{N}$.  It follows from this that no limit function of the sequence $\{f_n\}$ is an automorphism of $X$. Also, we have}
\begin{align*}
\omega_X\bigl(a,f_n(a)\bigr) &\leq \omega_X(a,a_n)+\omega_X\bigl(a_n,f_n(a_n)\bigr)+\omega_X\bigl(f_n(a_n),f_n(a)\bigr)\\
&\leq \omega_X(a,a_n)+\ell \omega_X(a_n,a)\;,
\end{align*}
so there exists $L>0$ with $\omega_X\bigl(a,f_n(a)\bigr)<L$ for $n\in\mathbb{N}$. Then
\begin{align*}
\omega_X\bigl(f_n\circ f_{n-1} \circ \dots \circ f_{n-k+1}(a),f_n\circ f_{n-1} \circ \dots \circ f_{n-k}(a)\bigr) &\leq \ell^{k}\omega_X\bigl(a,f_{n-k}(a)\bigr)\\
&< L\ell^{k}\;,
\end{align*}
for $k=1,2,\dots,n-1$. By summing these we see that $\omega_X\bigl(L_n(a),a\bigr)<L/(1-\ell)$, for $n\in\mathbb{N}$. Therefore $\{L_n\}$ is relatively compact in $\Hol(X,X)$ (see Theorem~\ref{theorem1}, to follow), as required.

Theorems~\ref{theoremH} and~\ref{theoremI} will be proved in Section~\ref{section7}. 

In Section~\ref{section8} we provide an example that addresses an open question from \cite{AbCh2022}. This example concerns sequences of functions $\{h_n\}$ that are \emph{compactly divergent}, which means that, for any compact subset $K$ of $X$, there is a positive integer $n_0$ with $h_n(K)\cap K=\varnothing$, for $n\geq n_0$. Therorem~1.7 of \cite{AbCh2022} states that if $F$ is a holomorphic self-map of a hyperbolic Riemann surface $X$ for which the sequence of iterates $\{F^n\}$ is compactly divergent, and if $\{f_n\}$ is a sequence in $\Hol(X,X)$ that converges sufficiently quickly to~$F$, then the left iterated function system $\{L_n\}$ generated by $\{f_n\}$ is compactly divergent also. The open question from \cite{AbCh2022} is whether one can find a sequence $\{f_n\}$ in $\Hol(X,X)$ that converges to $F$ (slowly) such that $\{L_n\}$  is not compactly divergent. The example we offer is such that $\{L_n\}$ neither converges in $\Hol(X,X)$ and nor is it compactly divergent, thereby demonstrating the necessity of a control on the speed of convergence towards~$F$ in \cite{AbCh2022}*{Theorem 1.7}. We thank Marco Vergamini for a useful suggestion. 

Finally, in Section~\ref{section8} we also give another example of a wildly behaved left iterated function system. Indeed, we construct a sequence $\{\gamma_n\}$ of automorphisms of~$\D$ that converges to $\id_\D$ such that the left iterated function system generated by $\{\gamma_n\}$ is dense in~$\Aut(\D)$. 

\subsection{Right iterated function systems}

Next we present a version of Theorem~\ref{theoremA} for right rather than left iterated function systems. Thus, given a right iterated function system $R_n=f_1\circ f_{2}\circ \dots\circ f_n$ in $\Hol(\D,\D)$, we seek a sequence of automorphisms $\{\gamma_n\}$ of $\D$ and $h\in \Hol(\D,\D)$ with $R_n\circ \gamma_n^{-1}\to h$. To obtain a result of this type, it is necessary to assume the existence of a backward orbit for $\{R_n\}$; that is, we need to assume that there is a sequence $\{w_n\}$ in $\D$ with $f_n(w_n)=w_{n-1}$, for $n\in\mathbb{N}$. Without this assumption, it could be that the nested sequence of sets $\overline{\D}\supseteq \overline{R_1(\D)}\supseteq \overline{R_2(\D)}\supseteq \dotsb$ satisfies
\[
\bigcap_{n=1}^\infty \overline{R_n(\D)}\subseteq \partial \D\;.
\]
This would render the deduction $R_n\circ \gamma_n^{-1}\to h$ unobtainable because $R_n\circ \gamma_n^{-1}(\D)=R_n(\D)$.

\begin{restatable}{thm}{theoremJ}
\label{theoremJ}
Let $R_n=f_1\circ f_{2}\circ \dots\circ f_n$ be a right iterated function system in $\Hol(\D,\D)$ for which there exists an infinite backward orbit $\{w_n\}$. Then there exists a sequence $\{\gamma_n\}$ in $\Aut(\D)$ with $\gamma_n(w_n)=w_0$ and $h\in \Hol(\D,\D)$ for which $R_n\circ\gamma_n^{-1}\to h$. Furthermore, $h$ is uniquely specified by $\{R_n\}$ and $\{w_n\}$ up to right composition by elements of $\Aut(\D)$.
\end{restatable}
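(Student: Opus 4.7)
The plan is to pick automorphisms $\gamma_n \in \Aut(\D)$ satisfying $\gamma_n(w_n) = w_0$ (possible since $\Aut(\D)$ acts transitively on $\D$), and to exploit the remaining rotational freedom about $w_0$ to make $F_n := R_n \circ \gamma_n^{-1}$ converge. The backward orbit property gives $F_n(w_0) = R_n(w_n) = w_0$ for every $n$, so $\{F_n\}$ is a normal family by Montel's theorem. The chain rule for hyperbolic distortion applied along the backward orbit yields $F_n^\#(w_0) = R_n^\#(w_n) = \prod_{k=1}^n f_k^\#(w_k)$, a decreasing sequence in $[0,1]$ converging to some $\alpha$. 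I would fix the rotational freedom so that $F_n'(w_0) \geq 0$, giving $F_n'(w_0) = F_n^\#(w_0) \searrow \alpha$.

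The main case is $\alpha > 0$. For $n < m$, first I would write $F_m = F_n \circ \xi_{n,m}$, where $\xi_{n,m} := \gamma_n \circ f_{n+1} \circ \cdots \circ f_m \circ \gamma_m^{-1}$ is a self-map of $\D$ fixing $w_0$ with $\xi_{n,m}'(w_0) = F_m^\#(w_0)/F_n^\#(w_0) \to 1$ as $n,m \to \infty$. By Schwarz's lemma, any subsequential limit of the normal family $\{\xi_{n,m}\}$ fixing $w_0$ with unit derivative must be $\id_\D$; hence $\xi_{n,m} \to \id_\D$ uniformly on compacts. The Schwarz--Pick contraction $\omega(F_m(z), F_n(z)) \leq \omega(\xi_{n,m}(z), z) \to 0$ then shows that $\{F_n\}$ is Cauchy uniformly on compact subsets of $\D$, so $F_n \to h$ for some $h \in \Hol(\D, \D)$.

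The degenerate case $\alpha = 0$ is where I expect the main difficulty to lie, since the ratio $F_m^\#(w_0)/F_n^\#(w_0)$ is no longer controlled. By Montel I would extract a subsequence $F_{n_k} \to h$. If $h \equiv w_0$, the factorisation $F_n = F_{n_k} \circ \sigma_n$ with $\sigma_n$ fixing $w_0$---so by Schwarz--Pick $\sigma_n$ maps each fixed compact of $\D$ into a fixed compact---propagates the convergence from the subsequence to the full sequence, giving $F_n \to w_0$. If $h$ is non-constant then $h - w_0$ has vanishing order $p \geq 2$ at $w_0$; I would re-normalise the tail of $\{\gamma_n\}$ by rotations so that the $p$-th Taylor coefficient of $F_n - w_0$ at $w_0$ is real and non-negative, and then adapt the Schwarz lemma argument of the main case to order $p$ in order to conclude $F_n \to h$.

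For the uniqueness statement, another admissible choice $\{\tilde\gamma_n\}$ produces $\rho_n := \tilde\gamma_n \circ \gamma_n^{-1} \in \Aut(\D)$ fixing $w_0$, so $\rho_n$ is a hyperbolic rotation about $w_0$. The space of such rotations is a compact one-parameter family, so passing to a subsequence along which $\rho_n \to \rho$, any limit $\tilde h$ of $R_n \circ \tilde\gamma_n^{-1} = F_n \circ \rho_n^{-1}$ satisfies $\tilde h = h \circ \rho^{-1}$ with $\rho^{-1} \in \Aut(\D)$, confirming uniqueness up to right composition by an element of $\Aut(\D)$.
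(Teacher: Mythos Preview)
Your approach is close to the paper's and the main cases are correct, but there is a gap in the subcase $\alpha=0$ with a nonconstant subsequential limit~$h$.

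Your Cauchy argument when $\alpha>0$ is clean and correct, and your propagation argument when some subsequence converges to the constant $w_0$ is also fine (indeed it coincides with the paper's nested-images observation $H_n(K)\subseteq H_{n-1}(K)$). The uniqueness paragraph is essentially the paper's.

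The problematic case is $\alpha=0$ with $h$ nonconstant. Here your normalization $F_n'(w_0)\geq 0$ carries no information once some $F_{n_0}'(w_0)=0$ (and then $F_n'(w_0)=0$ for all $n\geq n_0$), so the rotational freedom in $\gamma_n$ is unconstrained from that point on. Your suggested fix, re-normalising by the $p$-th Taylor coefficient and ``adapting the Schwarz argument to order~$p$'', is not a routine extension: the order of vanishing of $F_n$ at $w_0$ need not equal~$p$ (it can be $1$ even when $F_n'(w_0)\to 0$), and the relation $F_m=F_n\circ\xi_{n,m}$ gives $F_m^{(p)}(w_0)$ as a complicated polynomial in the Taylor coefficients of $F_n$ and $\xi_{n,m}$, not simply $F_n^{(p)}(w_0)\cdot\xi_{n,m}'(w_0)^p$ unless $F_n$ already vanishes to order exactly~$p$. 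So one cannot read off $\arg\xi_{n,m}'(w_0)$ from this normalization, and your Cauchy argument does not go through.

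The paper avoids this difficulty by normalising at the level of the \emph{individual} conjugated maps: with $g_n=\gamma_{n-1}\circ f_n\circ\gamma_n^{-1}$ one arranges $g_n'(0)\geq 0$ for every~$n$ (this is strictly stronger than your $H_n'(0)\geq 0$ when some $f_k'(w_k)=0$). Then, rather than a Cauchy argument, the paper takes two subsequential limits $\phi$, $\psi$, writes $\phi=\psi\circ\alpha$ and $\psi=\phi\circ\beta$, and uses the relation $\phi=\phi\circ(\beta\circ\alpha)$ together with nonconstancy of $\phi$ to force $\beta\circ\alpha$, and hence $\alpha$ and $\beta$, to be rotations. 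The step-level normalization then gives $\alpha'(0)\geq 0$, so $\alpha=\id_\D$ and $\phi=\psi$. This argument is insensitive to whether $H_n'(0)\to 0$, so no order-$p$ analysis is needed. Adopting this normalization (or, equivalently, using the remaining rotational freedom after $n_0$ to enforce $g_n'(0)\geq 0$) closes your gap immediately.
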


We prove Theorem~\ref{theoremJ} in Section~\ref{section9}. 

Our final result on right iterated function systems is a generalisation of a theorem of Kuznetsov \cite{Ku2007}, who proved the equivalence of statements (i) and (iv), below, in the special case when $X$ is a hyperbolic plane domain.

\begin{restatable}{thm}{theoremK}
\label{theoremK}
Let $R_n=f_1\circ f_{2}\circ \dots\circ f_n$ be a right iterated function system that lies in a relatively compact semigroup in $\Hol(X,X)$. Suppose that the maps $f_n$ are nonconstant.  Then the following statements are equivalent.
\begin{enumerate}
\item The sequence $\{R_n\}$ converges to a constant in $X$.
\item There exists a subsequence of $\{R_n\}$ that converges to a constant in $X$.
\item There exists $z_0\in X$ with $\sum_n (1-f_n^\#(z_0))=+\infty$.
\item For all $z\in X$ we have $\sum_n (1-f_n^\#(z))=+\infty$.
\end{enumerate}
\end{restatable}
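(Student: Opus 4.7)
The plan is to establish the cycle (i) $\Rightarrow$ (ii) $\Rightarrow$ (iii) $\Rightarrow$ (i), together with the equivalence (iii) $\Leftrightarrow$ (iv). The central computation is the chain rule for hyperbolic distortion,
\[
R_n^\#(z) = \prod_{k=1}^n f_k^\#\bigl(w_{k,n}(z)\bigr), \quad w_{k,n}(z) := f_{k+1}\circ \cdots \circ f_n(z),
\]
where each $w_{k,n}(z)$ lies in the compact set $K_z\subset X$ obtained by evaluating the closure $\overline{S}$ of the ambient relatively compact semigroup at $z$. The second ingredient is a two-sided distortion comparison, which I expect to draw from Section~\ref{section3}: for every $r>0$ there exist constants $0 < c_r \leq C_r$ such that
\[
c_r \bigl(1 - f^\#(z)\bigr) \leq 1 - f^\#(w) \leq C_r \bigl(1 - f^\#(z)\bigr)
\]
uniformly for all $f \in \Hol(X,X)$ and $z, w \in X$ with $\omega_X(z,w)\leq r$. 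The symmetry in $z$ and $w$ here delivers (iii) $\Leftrightarrow$ (iv) immediately.

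For (iii) $\Rightarrow$ (i), if $\sum_k (1-f_k^\#(z_0))=+\infty$ then the comparison gives $\sum_k (1-f_k^\#(w_{k,n}(z))) \geq c \sum_k (1-f_k^\#(z_0)) \to +\infty$ for every $z \in X$, whence $-\log R_n^\#(z) \geq \sum_k (1-f_k^\#(w_{k,n}(z)))\to +\infty$ and so $R_n^\#(z)\to 0$ pointwise. By relative compactness, every subsequential limit $R_\infty$ of $\{R_n\}$ lies in $\Hol(X,X)$ and satisfies $R_\infty^\#\equiv 0$, forcing $R_\infty$ to be constant. To deduce that the full sequence converges to a \emph{single} constant, I use the semigroup structure: if $R_{n_k}\to c$ and $R_{m_k}\to c'$ are two convergent subsequences, arrange $m_k > n_k$ and decompose $R_{m_k} = R_{n_k}\circ g_k$ with $g_k := f_{n_k+1}\circ \cdots \circ f_{m_k}\in \overline{S}$; a further subsequence yields $g_k\to g_\infty\in \overline{S}$, and since $c$ is constant we obtain $c' = c\circ g_\infty = c$.

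Since (i) $\Rightarrow$ (ii) is trivial, only (ii) $\Rightarrow$ (iii) remains. Assume $R_{n_k}\to c \in X$, so $R_{n_k}^\#(z_0)\to 0$, and suppose for contradiction that $\sum_k (1-f_k^\#(z_0))<+\infty$. By the comparison, $\sum_k (1-f_k^\#(w_{k,n}(z_0)))$ is uniformly bounded in $n$ and $1-f_k^\#(w_{k,n}(z_0))\leq 1/2$ for all $k$ past some $k_0$ independent of $n$. The tail product $\prod_{k>k_0} f_k^\#(w_{k,n}(z_0))$ is then bounded below by a fixed positive constant via $-\log(1-x)\leq 2x$ for $x\leq 1/2$, while the initial segment $\prod_{k\leq k_0} f_k^\#(w_{k,n}(z_0))$ is bounded below by $\prod_{k\leq k_0}\min_{w \in K_{z_0}} f_k^\#(w) > 0$, strictly positive because each $f_k$ is nonconstant and $K_{z_0}$ is compact. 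Hence $R_n^\#(z_0)$ is uniformly bounded below by a positive constant, contradicting $R_{n_k}^\#(z_0)\to 0$. The main obstacle will be the careful handling of this initial segment -- where the nonconstancy hypothesis becomes essential, preventing individual $f_k^\#$'s from vanishing on $K_{z_0}$ -- together with setting up the two-sided distortion comparison in the precise symmetric form required.
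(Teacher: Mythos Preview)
Your direct proof of (iii)$\Rightarrow$(i)---showing $R_n^\#\to 0$ pointwise and then using the factorisation $R_{m_k}=R_{n_k}\circ g_k$ to pin down a single constant limit---is a clean alternative to the paper's contrapositive route through (iv)$\Rightarrow$(i), and the rest of your cycle lines up with the paper's argument.

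There is, however, a genuine gap in your (ii)$\Rightarrow$(iii). You assert that the initial segment $\prod_{k\leq k_0}\min_{w\in K_{z_0}} f_k^\#(w)$ is strictly positive ``because each $f_k$ is nonconstant and $K_{z_0}$ is compact.'' This is false: nonconstancy of $f_k$ only guarantees $f_k^\#\not\equiv 0$, not $f_k^\#>0$ on a prescribed compact set---the hyperbolic distortion vanishes at every critical point of $f_k$, and nothing prevents such a point from lying in $K_{z_0}$. (Concretely: take $X=\D$, $f_1(z)=z^2$, $f_n=\id_\D$ for $n\geq 2$, and $z_0=0$; then $\sum_n(1-f_n^\#(0))=1<+\infty$ yet $R_n^\#(0)=0$ for every $n$, so your claimed positive lower bound on $R_n^\#(z_0)$ cannot hold.) The paper avoids this by discarding the first $N$ maps outright: since $f_n^\#(z_0)\to 1$, the distortion comparison (Corollary~\ref{corollary1}) forces $f_n^\#>0$ on all of $K$ once $n>N$, and one then relabels and works with the tail system only. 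This reduction is legitimate because $R_n=R_N\circ(f_{N+1}\circ\cdots\circ f_n)$ with $R_N$ nonconstant (a finite composition of nonconstant holomorphic maps), so a constant subsequential limit of $\{R_n\}$ would force one for the tail system as well. Your argument can be repaired the same way: abandon the attempt to bound the finite initial product directly and instead transfer the problem to the tail system $f_{k_0+1},f_{k_0+2},\ldots$, where your lower bound on $\prod_{k>k_0}$ already does the work.
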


The proof of Theorem~\ref{theoremK} is far shorter than that of \cite{Ku2007}; it is given in Section~\ref{section10}. 

\section{Relatively compact families of semicontractions}
\label{section2}

Central to our work is the following result, which is a corollary of the Arzelà–Ascoli theorem. Such is its importance that we include a proof, even though one can likely be found elsewhere. In the statement of the theorem we write $\mathscr{F}(z)$ for the set $\{f(z)\mid f\in\mathscr{F}\}$. \edit{Recall that a \emph{proper} metric space is a metric space in which all closed bounded subsets are compact. All hyperbolic Riemann surfaces are proper metric spaces.}

\begin{theorem}\label{theorem1}
Let $\mathscr{F}$ be a family of semicontractions of a \edit{proper} metric space~$X$. Then $\mathscr{F}$ is relatively compact in $C(X,X)$ if and only for some (and hence any) $z\in X$ the set $\mathscr{F}(z)$ is bounded.
\end{theorem}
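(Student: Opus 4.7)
The plan is to deduce the theorem from the Arzel\`a--Ascoli theorem for the compact-open topology on $C(X,X)$. The forward implication is the easy one: if $\mathscr{F}$ is relatively compact in $C(X,X)$, then for each $z\in X$ the evaluation map $\mathrm{ev}_z\colon C(X,X)\to X$, $f\mapsto f(z)$, is continuous, so the image $\mathscr{F}(z)=\mathrm{ev}_z(\mathscr{F})$ is relatively compact, and in particular bounded, in $X$.

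For the converse, I would fix some $z_0\in X$ with $\mathscr{F}(z_0)$ bounded and verify the two hypotheses of Arzel\`a--Ascoli. Equicontinuity is immediate from the semicontraction property: each $f\in\mathscr{F}$ is $1$-Lipschitz, so $\mathscr{F}$ is in fact uniformly equicontinuous on $X$. To propagate boundedness from $z_0$ to an arbitrary point $z\in X$, I would apply the semicontraction inequality
\[
d(f(z),f(z_0))\leq d(z,z_0),
\]
which shows that $\mathscr{F}(z)$ lies within distance $d(z,z_0)$ of the bounded set $\mathscr{F}(z_0)$, and is therefore itself bounded.

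The step requiring the most care is upgrading ``bounded'' to ``relatively compact'' for each $\mathscr{F}(z)$, since Arzel\`a--Ascoli in the compact-open topology demands pointwise relative compactness, not mere boundedness of the orbits. This is the only place where local compactness and completeness of $X$ are used in a nontrivial way: these hypotheses are what guarantee that a bounded subset of $X$ constrained as above has compact closure. Once this is in hand, equicontinuity together with pointwise relative compactness is precisely the input for Arzel\`a--Ascoli, yielding that $\mathscr{F}$ is relatively compact in $C(X,X)$ and completing the proof. The only genuine obstacle is therefore this passage from boundedness to relative compactness in $X$; everything else is a direct unpacking of the semicontraction condition.
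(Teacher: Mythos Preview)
Your proposal is correct and follows essentially the same route as the paper: both invoke the Arzel\`a--Ascoli theorem, note that equicontinuity is automatic for a family of semicontractions, propagate boundedness of $\mathscr{F}(z)$ from one point to all points via the $1$-Lipschitz inequality, and use local compactness plus completeness to upgrade bounded orbits to relatively compact ones. The only cosmetic difference is that you obtain the forward implication via continuity of the evaluation map, whereas the paper reads it off directly from the ``only if'' direction of Arzel\`a--Ascoli.
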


\begin{proof}
\edit{Since $X$ is proper it must be locally compact and complete. As a consequence, we can apply the} Ascoli--Arzel\`a theorem (see, for example, \cite{Ke1955}), which tells us that a family $\mathscr{G}\subseteq C(X,X)$ is relatively compact in $C(X,X)$ if and only if 
\begin{enumerate}
\item $\mathscr{G}$ is equicontinuous and 
\item $\mathscr{G}(z)$ is relatively compact in $X$, for every $z\in X$. 
\end{enumerate}
Since $\mathscr{F}$ comprises semicontractions, condition (i) is automatically satisfied. 

Now, because $X$ is \edit{proper}, a subset $C$ of $X$ is relatively compact in $X$ if and only if it is bounded. Therefore $\mathscr{F}$ is relatively compact in $C(X,X)$ if and only if $\mathscr{F}(z)$ is bounded in $X$, for every $z\in X$. 

To conclude the proof it suffices to observe that $\mathscr{F}(z)$ is bounded if and only if $\mathscr{F}(w)$ is bounded, for any two points $z$,~$w\in X$. 
Indeed, fix $z_0\in X$. Then for every $f\in\mathscr{F}$ we have
\[
d\bigl(z_0,f(w)\bigr)\leq d\bigl(z_0,f(z)\bigr)+d\bigl(f(z),f(w)\bigr)\leq d\bigl(z_0,f(z)\bigr)+d(z,w)\;.
\]
So if $\mathscr{F}(z)$ is bounded then $\mathscr{F}(w)$ is bounded. An analogous argument yields the converse.
\end{proof}

\section{Hyperbolic distortion}
\label{section3}

\edit{We denote by $\kappa_X$ the hyperbolic metric on a hyperbolic Riemann surface $X$, and we write $\kappa$ for the hyperbolic metric on $\mathbb{D}$. We refer the reader to  \cite{Ab2022}*{Chapter 1} for the main properties of the hyperbolic metric and distance on hyperbolic Riemann surfaces.}

The hyperbolic metric satisfies a Schwarz--Pick lemma, as follows (see, for example, \cite{Ab2022}*{Theorem 1.9.23}). Here we denote \edit{by $O$ the origin of $T_zX$, the complex tangent space to $X$ at~$z$.}

\begin{theorem}
\label{th:SP}
Let $X$ and $Y$ be two hyperbolic Riemann surfaces and $f\colon X\longrightarrow Y$ a holomorphic map. Then
\[
\kappa_Y\bigl(f(z);df_z(\xi)\bigr)\leq \kappa_X(z;\xi),
\]
for all $z\in X$ and $\xi\in T_zX$. Furthermore, equality holds for some $z\in X$ and $\xi\in T_zX\setminus\{O\}$ if and only if $f$ is a covering map, and if $f$ is a covering map then in fact equality holds everywhere.
\end{theorem}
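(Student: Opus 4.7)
The plan is to reduce the statement to the classical Schwarz–Pick lemma on the unit disc by lifting $f$ to the universal covers. Fix holomorphic universal covering maps $\pi_X\colon\D\to X$ and $\pi_Y\colon\D\to Y$. Since $\D$ is simply connected, the composition $f\circ\pi_X\colon\D\to Y$ satisfies the standard covering-lifting criterion, so there exists a holomorphic map $\tilde{f}\colon\D\to\D$ with $\pi_Y\circ\tilde{f}=f\circ\pi_X$ (the lift is unique after choosing a base-point and its image).

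Given $z\in X$ and $\xi\in T_zX$, pick $\zeta\in\D$ with $\pi_X(\zeta)=z$ and $\eta\in\C$ with $d(\pi_X)_\zeta(\eta)=\xi$, so $\kappa_X(z;\xi)=\kappa(\zeta;\eta)$ by Definition~\ref{def:Poinc}. Applying the chain rule to $\pi_Y\circ\tilde{f}=f\circ\pi_X$ yields $d(\pi_Y)_{\tilde{f}(\zeta)}\bigl(d\tilde{f}_\zeta(\eta)\bigr)=df_z(\xi)$, and since $\pi_Y(\tilde{f}(\zeta))=f(z)$, Definition~\ref{def:Poinc} gives $\kappa_Y\bigl(f(z);df_z(\xi)\bigr)=\kappa\bigl(\tilde{f}(\zeta);d\tilde{f}_\zeta(\eta)\bigr)$. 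The inequality then follows directly from the classical Schwarz–Pick lemma on $\D$ applied to $\tilde{f}$, that is, $\kappa\bigl(\tilde{f}(\zeta);d\tilde{f}_\zeta(\eta)\bigr)\leq\kappa(\zeta;\eta)$, with equality (for some nonzero $\eta$) if and only if $\tilde{f}\in\Aut(\D)$, in which case equality holds at every point and every tangent vector.

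For the equality clause, suppose equality holds at some $z\in X$ and some $\xi\in T_zX\setminus\{O\}$. Since $d(\pi_X)_\zeta$ is a linear isomorphism, the corresponding $\eta\in\C$ is nonzero, so the classical rigidity statement forces $\tilde{f}$ to be an automorphism of $\D$. Since $\pi_Y$ is a covering map and $\tilde{f}$ is a conformal automorphism (hence a covering map), their composition $\pi_Y\circ\tilde{f}=f\circ\pi_X$ is a covering map onto $Y$. Because $\pi_X$ itself is a covering map from $\D$ onto $X$, a standard covering-space argument then shows that $f\colon X\to Y$ must be a covering map. Moreover, since equality holds everywhere for $\tilde{f}\in\Aut(\D)$, the pointwise translation above shows that equality holds everywhere for $f$ as well.

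Conversely, if $f$ is a covering map, then $f\circ\pi_X\colon\D\to Y$ is a composition of covering maps and is therefore a covering map. By uniqueness of the universal cover, its lift $\tilde{f}\colon\D\to\D$ is a covering map between simply connected surfaces, hence an automorphism of $\D$; so the classical Schwarz–Pick equality case gives equality everywhere for $\tilde{f}$, and therefore also for $f$ via the identification above. The main technical point I expect to need care is the implication ``$\tilde{f}\in\Aut(\D)$ implies $f$ is a covering map'': one has to verify directly, using the relation $\pi_Y\circ\tilde f=f\circ\pi_X$ and evenly covered neighbourhoods in $Y$, that every point of $Y$ has an evenly covered neighbourhood under $f$; everything else is essentially bookkeeping translating the statement through the covering projections.
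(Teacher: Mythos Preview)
The paper does not actually supply a proof of this theorem; it is stated with a reference (``see, for example, \cite{Ab2022}*{Theorem 1.9.23}'') and then used as a tool. So there is no in-paper argument to compare your proposal against.

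Your argument is the standard one and is correct in outline: lift through the universal covers, invoke the classical Schwarz--Pick lemma on~$\D$, and translate the equality clause via covering-space theory. Two points deserve a little more care than you currently give them. First, in the converse direction you assert that ``$f\circ\pi_X$ is a composition of covering maps and is therefore a covering map''; composition of covering maps is not a covering map in complete generality, so you should note that it holds here because Riemann surfaces are locally path-connected and semi-locally simply connected (equivalently, because $Y$ admits a universal cover). Second, for the forward direction of the equality clause, the cleanest way to pass from ``$\tilde f\in\Aut(\D)$'' to ``$f$ is a covering map'' is to observe that conjugation by~$\tilde f$ embeds the deck group $\Gamma_X$ of $\pi_X$ as a subgroup of the deck group $\Gamma_Y$ of $\pi_Y$, so that $f$ is identified with the quotient map $\D/\Gamma_X\to\D/\Gamma_Y$ induced by this inclusion of groups, which is a covering. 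Your sketch via evenly covered neighbourhoods would also work but is more laborious. With these two clarifications the proof is complete.
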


Following  \cite{BeMi2007}, we can use the hyperbolic metric to measure the distortion of a holomorphic map.

\begin{definition}
\label{def:distortion}
Let $f\in\Hol(X,Y)$ be a holomorphic map between two hyperbolic Riemann surfaces. The \emph{hyperbolic distortion} of~$f$ is the continuous map $f^\#\colon X\longrightarrow\R^+$ given by
\[
f^\#(z)=\frac{\kappa_Y\bigl(f(z);df_z(\xi)\bigr)}{\kappa_X(z;\xi)},
\]
for $z\in X$ and $\xi\in T_zX\setminus\{O\}$. This definition is independent of the choice of $\xi$ because $df_z$ is a complex linear map. 
\end{definition}

\begin{remark}
\label{rem:hypdistD}
The hyperbolic distortion of a map $f\in\Hol(\D,\D)$ is given by
\[
f^\#(z)=|f'(z)|\frac{1-|z|^2}{1-|f(z)|^2}\;.
\]
In particular, $f^\#=|f^h|$, where $f^h$ is the hyperbolic derivative of~$f$ (see \cite{Ab2022}*{Section 1.5}). 
\end{remark}

\edit{The remainder of this section concerns standard properties of hyperbolic distortion that can be found in, for example,  \cite{BeMi2007,GuKoMoRo2025} for holomorphic self-maps of the unit disc -- the more general statements for hyperbolic Riemann surfaces presented here can be proven using lifting arguments. \emph{Or see \href{https://arxiv.org/abs/2504.21124v1}{Version 1} of this manuscript for proofs.}

The first lemma lists some of the basic properties of hyperbolic distortion. Parts (i) to (iii) follow from Theorem~\ref{th:SP}, part (iv) is similar to the case $X=\mathbb{D}$ largely covered in \cite{BeMi2007}, and part (v) follows from classical complex analysis.}

\begin{lemma}
\label{th:chain}
Let $f\in\Hol(X,Y)$ for hyperbolic Riemann surfaces $X$ and $Y$. 
\begin{enumerate}
\item We have $0\leq f^\#(z)\leq 1$ for all $z\in X$.
\item If $f$ is a covering map then $f^\#(z)=1$ for all $z\in X$. 
\item If there exists $z_0\in X$ such that $f^\#(z_0)=1$ then $f$ is a covering map.
\item Let $Z$ be a hyperbolic Riemann surface and $g\in\Hol(Y,Z)$. Then $(g\circ f)^\#=(g^\#\circ f)f^\#$.
\item If $\{f_n\}\subset\Hol(X,Y)$ is a sequence of holomorphic maps that  converges to $f\in\Hol(X,Y)$ then $f_n^\#\to f^\#$ uniformly on compact subsets of $X$.
\end{enumerate}
\end{lemma}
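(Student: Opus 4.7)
The plan is to obtain (i)–(iii) as direct consequences of the Schwarz–Pick Theorem~\ref{th:SP}, to prove (iv) by a chain-rule computation inside the ratio defining $f^\#$, and to handle (v) through the local coordinate expression of the hyperbolic distortion. Items (i)–(iv) will be essentially one-liners; the bulk of the work lies in (v).

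For (i)–(iii), by Definition~\ref{def:distortion} the ratio defining $f^\#$ is manifestly nonnegative, while Theorem~\ref{th:SP} supplies the upper bound $f^\#\leq 1$, which gives (i). For (ii), the equality clause in Theorem~\ref{th:SP} says that a holomorphic covering map realises equality in the Schwarz–Pick inequality everywhere, so $f^\#\equiv 1$. For (iii), conversely, the hypothesis $f^\#(z_0)=1$ means that for any nonzero $\xi\in T_{z_0}X$ we have $\kappa_Y\bigl(f(z_0);df_{z_0}(\xi)\bigr)=\kappa_X(z_0;\xi)>0$, and the equality clause of Theorem~\ref{th:SP} now forces $f$ to be a covering map. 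For (iv), I would pick any $\xi\in T_zX\setminus\{O\}$ with $df_z(\xi)\neq O$, insert and cancel the factor $\kappa_Y\bigl(f(z);df_z(\xi)\bigr)$ inside the ratio for $(g\circ f)^\#(z)$, using $d(g\circ f)_z=dg_{f(z)}\circ df_z$, and read off $(g\circ f)^\#(z)=g^\#\bigl(f(z)\bigr)\,f^\#(z)$. The only wrinkle is the discrete set of points where $df_z$ vanishes, but there both sides are zero and the identity holds trivially.

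For (v), I would pass to local holomorphic charts, in which the hyperbolic metric takes the form $\kappa_X(z;\xi)=\rho_X(z)|\xi|$ for a smooth positive Poincar\'e density $\rho_X$, and similarly for $Y$, so that the distortion becomes
\[
f^\#(z)=|f'(z)|\,\frac{\rho_Y\bigl(f(z)\bigr)}{\rho_X(z)}
\]
in coordinates (compare Remark~\ref{rem:hypdistD}). Given a compact $K\subset X$, the images $\{f_n(K)\}$ eventually lie in a fixed compact neighbourhood of $f(K)$ in $Y$, which can be covered by finitely many charts. On each such chart, the Cauchy estimates upgrade the compact-open convergence $f_n\to f$ to uniform convergence $f_n'\to f'$ on compact subsets, while continuity of $\rho_Y$ yields $\rho_Y\circ f_n\to \rho_Y\circ f$ uniformly; combining these over a finite chart cover delivers $f_n^\#\to f^\#$ uniformly on $K$. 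The main obstacle anywhere in the proof is precisely this bookkeeping in~(v) — assembling the local chart data into a genuinely uniform statement — but it ultimately reduces to the standard fact that compact-open convergence of holomorphic maps upgrades to uniform convergence of derivatives on compacts.
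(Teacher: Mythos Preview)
Your proof is correct and follows essentially the same approach as the paper: parts (i)--(iii) are deduced directly from the Schwarz--Pick Theorem~\ref{th:SP}, part (iv) is the same chain-rule computation with the degenerate case $df_z=O$ handled separately, and part (v) rests on the classical fact that compact-open convergence of holomorphic maps implies uniform convergence of derivatives on compacta. The only difference is that you unpack the argument for (v) in local coordinates with Poincar\'e densities, whereas the paper simply invokes this classical fact in one line.
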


We also have the following elementary lemma\edit{, which can be proven by integrating the hyperbolic metric along suitable paths.}

\begin{lemma}
\label{lemma1}
Let $f\in\Hol(X,Y)$ for hyperbolic Riemann surfaces $X$ and $Y$. \edit{Let $K$ be the closed hyperbolic disc in $X$ centred at $z$ of radius $r$, and} let $\ell_K = \sup_{z\in K} f^\#(z)$. Then 
\[
\omega_Y\bigl(f(z),f(w)\bigr)\leq \ell_K \omega_X(z,w),
\]
for all $z,w\in K$.
\end{lemma}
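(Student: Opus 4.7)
The plan is to bound $\omega_Y\bigl(f(z),f(w)\bigr)$ by integrating the hyperbolic distortion $f^\#$ along a path from $z$ to $w$ contained in $K$, where the integrand is controlled by $\ell_K$.

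Combining the Schwarz--Pick lemma (Theorem~\ref{th:SP}) with Definition~\ref{def:distortion}, for any piecewise $C^1$ path $\alpha\colon[0,1]\to X$ from $z$ to $w$ one has
\[
\omega_Y\bigl(f(z),f(w)\bigr)\leq\int_0^1\kappa_Y\bigl(f(\alpha(t));df_{\alpha(t)}(\dot\alpha(t))\bigr)\,dt=\int_0^1 f^\#\bigl(\alpha(t)\bigr)\kappa_X\bigl(\alpha(t);\dot\alpha(t)\bigr)\,dt.
\]
If $\alpha$ lies in~$K$ the integrand is bounded by $\ell_K\kappa_X(\alpha;\dot\alpha)$, so the right-hand side is at most $\ell_K L_X(\alpha)$. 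Consequently the lemma follows from the existence of paths in~$K$ from~$z$ to~$w$ whose $\omega_X$-length approximates $\omega_X(z,w)$.

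If $X=\D$, this is immediate: the hyperbolic disc $K$ is convex in the Poincar\'e metric, and the hyperbolic geodesic from $z$ to $w$ already lies in $K$ with length $\omega(z,w)$. For a general hyperbolic Riemann surface $X$, I would lift to a universal covering $\pi\colon\D\to X$, fix a lift $\tilde z_0$ of the centre of~$K$, and set $\tilde K=\overline{D_\D(\tilde z_0,r)}$. The chain rule together with $\pi^\#\equiv 1$ for covering maps (Lemma~\ref{th:chain}) yields $(f\circ\pi)^\#=f^\#\circ\pi$, which is bounded by $\ell_K$ on~$\tilde K$ since $\pi(\tilde K)\subseteq K$. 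Given $z,w\in K$, path lifting supplies lifts $\tilde z,\tilde w\in\tilde K$. By convexity of $\tilde K$ in~$\D$, the hyperbolic geodesic between $\tilde z$ and~$\tilde w$ stays in~$\tilde K$, and projecting by~$\pi$ produces a path in~$K$ from~$z$ to~$w$ of length $\omega(\tilde z,\tilde w)$, giving
\[
\omega_Y\bigl(f(z),f(w)\bigr)\leq\ell_K\,\omega(\tilde z,\tilde w).
\]

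The main obstacle, as I anticipate it, is to arrange that $\omega(\tilde z,\tilde w)=\omega_X(z,w)$, i.e.\ that a pair of lifts of $z$ and $w$ lying in~$\tilde K$ realises the Poincar\'e distance in~$\D$ between lifts of $z$ and~$w$. I would address this by approximation: for each $\varepsilon>0$, choose a path in~$X$ from~$z$ to~$w$ of length at most $\omega_X(z,w)+\varepsilon$, lift it to~$\D$ starting at a convenient lift of~$z$ in~$\tilde K$, and use this lifted curve in place of the geodesic in the integration, observing that its projection can be taken to remain in~$K$ so that $f^\#\leq\ell_K$ holds along the path. Letting $\varepsilon\to 0$ then yields the desired bound.
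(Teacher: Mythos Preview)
Your overall strategy---integrate the hyperbolic distortion along a path from $z$ to $w$ lying in $K$---is exactly the paper's. The paper simply takes a length-minimising geodesic $\sigma$ from $z$ to $w$ and cites \cite{Ab2022}*{Proposition~1.7.3} for the fact that $\sigma$ can be chosen with support in $K=\overline{D_X(z_0,r)}$; the integration then gives the bound in three lines. Your disc case and your lifting set-up are fine and are essentially a reconstruction of that cited fact.

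The gap is in your final approximation step. You choose a path $\alpha$ in $X$ from $z$ to $w$ of length at most $\omega_X(z,w)+\varepsilon$ and then assert that ``its projection can be taken to remain in~$K$''. But the projection of the lift of $\alpha$ is $\alpha$ itself, and an arbitrary near-minimising path in $X$ between two points of $K$ has no a~priori reason to stay in $K$; this assertion is precisely the geodesic-convexity statement you are trying to establish, so the argument is circular. Likewise, in the earlier lifting step, obtaining lifts $\tilde z,\tilde w\in\tilde K$ separately (via paths from the centre) need not give $\omega(\tilde z,\tilde w)=\omega_X(z,w)$, as you correctly flag. The clean way out is the one the paper takes: invoke the known fact that closed hyperbolic balls contain a minimising geodesic between any two of their points. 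If you want to prove this yourself, lift a minimising geodesic from $z$ to $w$ starting at a lift $\tilde z\in\tilde K$; its endpoint $\tilde w^\ast$ satisfies $\omega(\tilde z,\tilde w^\ast)=\omega_X(z,w)$, and one then argues that the $\D$-geodesic from $\tilde z$ to $\tilde w^\ast$ projects into $K$---but this last step still requires a genuine convexity argument, not an approximation.
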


Using universal covering maps and the hyperbolic distortion of self-maps of~the unit disc \edit{we can compute  hyperbolic distortion in terms of hyperbolic distance. Again we omit the proof; the same result for hyperbolic domains is stated in \cite[Section~11]{BeMi2007}.}

\begin{theorem}
\label{th:hddist}
Let $f\in\Hol(X,Y)$ for hyperbolic Riemann surfaces $X$ and $Y$. Then
\[
f^\#(z)=\lim_{z'\to z}\frac{\omega_Y\bigl(f(z'),f(z)\bigr)}{\omega_X(z',z)},
\]
for all $z\in X$.
\end{theorem}

The next theorem can be found in \edit{\cite[Theorem~11.2]{BeMi2007} when $X$ and $Y$ are hyperbolic domains, and the more general version for hyperbolic Riemann surfaces can be proved in much the same way.} Observe that, if $f$ is not a covering map, then the image of $f^\#$ lies in the unit disc and so we can measure the hyperbolic distance between any two points $f^\#(z)$ and $f^\#(w)$.

\begin{theorem}
\label{theorem2}
Let $f\in\Hol(X,Y)$ for hyperbolic Riemann surfaces $X$ and $Y$. Assume that $f$ is not a covering map. Then 
\begin{equation*}
\omega\bigl(f^\#(z),f^\#(w)\bigr)\leq 2\omega_X(z,w)\;,
\label{eq:th2}
\end{equation*}
for all $z,w\in X$.
\end{theorem}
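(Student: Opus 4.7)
My plan is to reduce the inequality to the case $X = Y = \D$ and then to exploit a symmetric auxiliary function on $\D\times\D$ whose diagonal restriction is the hyperbolic derivative.

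For the reduction, let $\pi_X\colon\D\to X$ and $\pi_Y\colon\D\to Y$ be holomorphic universal coverings and let $\tilde f\in\Hol(\D,\D)$ be a lift of $f$. Since $f$ is not a covering map, Lemma~\ref{th:chain}(iii) gives $f^\#\not\equiv 1$; combined with Lemma~\ref{th:hdDXY} this shows $\tilde f^\#\not\equiv 1$, so $\tilde f$ is not an automorphism of $\D$. Because $\omega_X(z,w)$ equals the infimum of $\omega(\zeta,\eta)$ over preimages $\pi_X(\zeta)=z$, $\pi_X(\eta)=w$, establishing the inequality for $\tilde f$ on $\D$, together with Lemma~\ref{th:hdDXY}, immediately yields \eqref{eq:th2} for $f$ by passing to the infimum.

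For the disc case, fix $f\in\Hol(\D,\D)$ not an automorphism, and write $f^h$ for the hyperbolic derivative so that $f^\#=|f^h|$ by Remark~\ref{rem:hypdistD}. I introduce
\[
\Psi(z,w) = \frac{f(z)-f(w)}{1-\overline{f(w)}\,f(z)}\cdot\frac{1-\bar w z}{z-w}.
\]
For each fixed $w$, $z\mapsto \Psi(z,w)$ extends holomorphically across $z=w$ with value $\Psi(w,w)=f^h(w)$. The Schwarz--Pick inequality gives $|\Psi(\cdot,w)|\leq 1$ on $\D$, and the maximum modulus principle combined with the assumption that $f$ is not an automorphism forces $\Psi(\cdot,w)$ to map $\D$ strictly into $\D$. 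Applying Schwarz--Pick then yields
\[
\omega\bigl(\Psi(z,w),f^h(w)\bigr)\leq \omega(z,w),
\]
and analogously $\omega\bigl(\Psi(w,z),f^h(z)\bigr)\leq \omega(z,w)$.

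To conclude, I use two elementary facts. First, $|\Psi(z,w)|=|\Psi(w,z)|$, since both equal $|\sigma_{f(w)}(f(z))|/|\sigma_w(z)|$ where $\sigma_a(\zeta)=(\zeta-a)/(1-\bar a\zeta)$, and the pseudo-hyperbolic distance is symmetric. Second, the map $\zeta\mapsto|\zeta|$ from $\D$ to $[0,1)$ is a semicontraction for $\omega$; equivalently,
\[
\left|\frac{|a|-|b|}{1-|a||b|}\right|\leq\left|\frac{a-b}{1-\bar a b}\right|,
\]
which follows from the identity $|1-\bar a b|^2-|a-b|^2=(1-|a||b|)^2-(|a|-|b|)^2=(1-|a|^2)(1-|b|^2)$ together with $\bigl||a|-|b|\bigr|\leq|a-b|$. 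Applying the semicontraction to each of the two Schwarz--Pick inequalities and then invoking the triangle inequality with intermediate point $|\Psi(z,w)|=|\Psi(w,z)|$ gives
\[
\omega\bigl(f^\#(z),f^\#(w)\bigr) \leq \omega\bigl(f^\#(z),|\Psi(w,z)|\bigr)+\omega\bigl(|\Psi(z,w)|,f^\#(w)\bigr) \leq 2\omega(z,w).
\]
The main obstacle is the choice of the auxiliary $\Psi$; although it fails to be jointly holomorphic, the symmetry of its modulus is precisely what allows both Schwarz--Pick estimates to combine cleanly through the semicontraction of $|\cdot|$.
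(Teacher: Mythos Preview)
Your argument is correct and follows the same reduction as the paper: lift to $\D$ via universal covers, use Lemma~\ref{th:hdDXY} to identify $f^\#$ with $\tilde f^\#$, and then appeal to the disc case together with the fact that $\omega_X(z,w)$ is realised by some pair of preimages. The only difference is that the paper simply cites the inequality $\omega\bigl(\tilde f^\#(\zeta),\tilde f^\#(\eta)\bigr)\le 2\omega(\zeta,\eta)$ from \cite{Ab2022}*{Corollary~1.5.10}, whereas you supply the standard proof of that disc result via the hyperbolic divided difference $\Psi$, Schwarz--Pick applied in each variable, and the semicontraction $\omega(|a|,|b|)\le\omega(a,b)$; your treatment is therefore self-contained but not a different route.
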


One can use this estimate to prove a relationship between hyperbolic distortions at two different points.

\begin{corollary}
\label{corollary1}
Let $f\in\Hol(X,Y)$ for hyperbolic Riemann surfaces $X$ and $Y$. Then 
\[
1-f^\#(z) \leq 2e^{4\omega_X(z,w)}\bigl(1-f^\#(w)\bigr)\;,
\]
for all $z,w\in X$.
\end{corollary}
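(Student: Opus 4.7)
My plan is to combine Theorem~\ref{theorem2} with a one-variable computation, exploiting the fact that $f^\#$ is real-valued. Theorem~\ref{theorem2} will give $\omega(f^\#(z),f^\#(w))\leq 2\,\omega_X(z,w)$ (provided $f$ is not a covering map), and what remains is to translate this hyperbolic bound on two real numbers in $[0,1)$ into the desired estimate comparing $1-f^\#(z)$ and $1-f^\#(w)$.

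First I would dispose of the covering case: if $f$ is a covering map then by Lemma~\ref{th:chain}(ii) we have $f^\#\equiv 1$, so both sides of the inequality vanish. Otherwise, parts~(i) and~(iii) of Lemma~\ref{th:chain} ensure that $a:=f^\#(z)$ and $b:=f^\#(w)$ lie in the real segment $[0,1)\subset\D$, so Theorem~\ref{theorem2} applies to bound $\omega(a,b)$ by $2\omega_X(z,w)$. It then suffices to prove $1-a\leq 2\,e^{2\omega(a,b)}(1-b)$.

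The final step is a short calculation split into two cases. When $b\leq a$ the inequality is trivial, since $1-a\leq 1-b$. When $a\leq b$, the explicit formula
\[
e^{2\omega(a,b)}=\frac{(1-a)(1+b)}{(1+a)(1-b)}
\]
for the Poincar\'e distance between two real points of $[0,1)$ rearranges to $1-a=\frac{1+a}{1+b}\,e^{2\omega(a,b)}(1-b)$, after which bounding $(1+a)/(1+b)\leq 2$ closes the argument. There is no genuine obstacle here; the only thing to notice is that because $f^\#$ takes real values one may freely use the closed-form expression for the hyperbolic distance on $[0,1)$, which makes the estimate immediate. (In fact, for $a\leq b$ one has $(1+a)/(1+b)\leq 1$, so the same argument yields the slightly sharper bound $1-f^\#(z)\leq e^{4\omega_X(z,w)}(1-f^\#(w))$; the factor~$2$ in the statement is simply the loss incurred in using a uniform rather than a case-sensitive estimate.)
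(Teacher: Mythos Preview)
Your proof is correct and follows essentially the same approach as the paper: handle the covering case separately, apply Theorem~\ref{theorem2}, and then convert the hyperbolic bound on the real numbers $a=f^\#(z)$, $b=f^\#(w)\in[0,1)$ into the stated inequality via the explicit formula for $\omega$ on $[0,1)$ together with the crude bound $(1+a)/(1+b)\leq 2$. The paper avoids your case split by writing $\omega(a,b)=|\omega(0,a)-\omega(0,b)|$ (since $[0,1)$ is a geodesic) and manipulating the logarithms directly, but the substance is identical; your observation that the factor~$2$ is not needed when $a\leq b$ is a pleasant bonus.
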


\edit{This corollary is known when $f$ is a self-map of the unit disc (see formula (3.4) of \cite{GuKoMoRo2025}). The more general statement follows quickly from Theorem~\ref{theorem2}.}

An immediately consequence of Corollary~\ref{corollary1} is the following observation\edit{; the proof (omitted) is straightforward.}

\begin{corollary}
\label{th:3.6}
Let $\{f_n\}\subset\Hol(X,Y)$ for hyperbolic Riemann surfaces $X$ and $Y$. Then the following assertions are equivalent.
\begin{enumerate}
\item For all $z\in X$ we have $\sum_n\bigl(1-f_n^\#(z)\bigr)<+\infty$.
\item There exists $z_0\in X$ such that $\sum_n\bigl(1-f_n^\#(z_0)\bigr)<+\infty$.
\item For any sequence $\{z_n\}$ relatively compact in~$X$ we have $\sum_n\bigl(1-f_n^\#(z_n)\bigr)<+\infty$.
\item There exists a sequence $\{z_n^o\}$ relatively compact in~$X$ such that $\sum_n\bigl(1-f_n^\#(z_n^o)\bigr)<+\infty$.
\end{enumerate}
\end{corollary}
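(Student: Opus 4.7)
The plan is to establish the four equivalences via the cycle (i) $\Rightarrow$ (iii) $\Rightarrow$ (iv) $\Rightarrow$ (ii) $\Rightarrow$ (i), with the trivial observations that (i) $\Rightarrow$ (ii) and (iii) $\Rightarrow$ (iv) used implicitly. The single tool we need is Corollary~\ref{corollary1}, which controls the ratio of $1-f^\#$ at two points by the hyperbolic distance between them.

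First, for (ii) $\Rightarrow$ (i), fix the point $z_0$ given by (ii). For any $z\in X$, Corollary~\ref{corollary1} yields
\[
1-f_n^\#(z) \leq 2e^{4\omega_X(z,z_0)}\bigl(1-f_n^\#(z_0)\bigr),
\]
and since the prefactor is independent of $n$, summing over $n$ shows $\sum_n(1-f_n^\#(z))<+\infty$.

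Next, for (i) $\Rightarrow$ (iii), let $\{z_n\}$ be relatively compact in $X$ and fix any $z_0\in X$. Since $X$ is locally compact and complete in the hyperbolic distance, the closure of $\{z_n\}$ is compact and hence has finite hyperbolic diameter; in particular there is a constant $M>0$ with $\omega_X(z_n,z_0)\leq M$ for every $n$. Applying Corollary~\ref{corollary1} with $z=z_n$ and $w=z_0$ gives
\[
1-f_n^\#(z_n) \leq 2e^{4M}\bigl(1-f_n^\#(z_0)\bigr),
\]
so (i) forces the sum in (iii) to converge.

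Finally, for (iv) $\Rightarrow$ (ii), fix any $z_0\in X$ and let $\{z_n^o\}$ be the relatively compact sequence of (iv). As before, there is $M>0$ with $\omega_X(z_0,z_n^o)\leq M$ for all $n$, and Corollary~\ref{corollary1} with the roles of $z$ and $w$ swapped gives
\[
1-f_n^\#(z_0) \leq 2e^{4M}\bigl(1-f_n^\#(z_n^o)\bigr),
\]
from which (ii) follows at once. The trivial implications (i) $\Rightarrow$ (ii) and (iii) $\Rightarrow$ (iv) close the loop. I do not anticipate any real obstacle here; the only thing requiring a moment of thought is the observation that relatively compact subsets of $X$ have finite hyperbolic diameter, which is immediate from continuity of $\omega_X$ together with local compactness and completeness.
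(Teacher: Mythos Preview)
Your proof is correct and follows essentially the same approach as the paper: both arguments rest entirely on Corollary~\ref{corollary1}, bounding $1-f_n^\#$ at one point by the same quantity at another via the factor $2e^{4\omega_X(\cdot,\cdot)}$, and using that a relatively compact sequence has $\sup_n\omega_X(z_0,z_n)<+\infty$. The paper organises the implications slightly differently (going directly (ii)$\Rightarrow$(iii) rather than via (i)), but the content is identical.
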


\section{Straightening of left iterated function systems}\label{section4}

We begin this section by proving Theorem~\ref{theoremA}.

\begin{proof}[Proof of Theorem~\ref{theoremA}]
We choose $\gamma_n\in\Aut(\D)$ with $\gamma_n(0)=L_n(0)$ and let $H_n=\gamma_n^{-1}\circ L_n$. Then $H_n(0)=0$. For $z\in\D$, we have
\[
\omega(H_n(z),0) = \omega(H_n(z),H_n(0))=\omega(L_n(z),L_n(0)).
\]
Since $\omega(L_n(z),L_n(0))\leq \omega(L_{n-1}(z),L_{n-1}(0))$, it follows that $\omega(H_n(z),0)\leq \omega(H_{n-1}(z),0)$, so $|H_1(z)|\geq |H_2(z)|\geq\dotsb$. One possibility is that $\{H_n\}$ converges to the constant map with value 0. If this is not so, then there exists $w\in \D$ for which $\{|H_n(w)|\}$ converges to a positive constant. Let $\theta_n$ be an argument of $H_n(w)$. By pre-composing $\gamma_n$ with the rotation $e^{i\theta_n}z$, we can assume that $\{H_n(w)\}$ is a sequence of positive numbers, so it converges to a positive number $w_0$.

Observe that the sequence $\{H_n\}$ is relatively compact, by Theorem~\ref{theorem1}. Suppose there are two subsequences $\{H_{m_i}\}$ and $\{H_{n_j}\}$ of $\{H_n\}$ with limits $h$ and $k$ respectively. Each of $h$ and $k$ fixes $0$ and $h(w)=k(w)=w_0$. By passing to further subsequences we can assume that $m_1<n_1<m_2<n_2<\dotsb$. Let $K_i=\gamma_{n_i}^{-1}\circ f_{n_i}\circ \dots\circ f_{m_i+1}\circ \gamma_{m_i}$. Then $H_{n_i}=K_i\circ H_{m_i}$. Note that $K_i(0)=0$, so $\{K_i\}$ is relatively compact. Consequently, there is a subsequence of $\{K_i\}$ with limit $\psi\in \Hol(\mathbb{D},\mathbb{D})$, where $\psi \circ h=k$. Notice that $\psi(0)=0$ and $\psi(w_0)=\psi(h(w))=k(w)=w_0$. It follows that $\psi=\id_\D$ since, among all holomorphic self-maps of $\mathbb{D}$, only the identity map fixes two distinct points. Hence $h=k$ and $H_n\to h$, as required.

It remains to prove that $h$ is unique up to left composition by elements of $\Aut(\D)$. Suppose then that there are sequences $\{\gamma_n\}$ and $\{\delta_n\}$ in $\Aut(\D)$, and $h,k\in\Hol(\D,\D)$, with $\gamma_n^{-1}\circ L_n\to h$ and $\delta_n^{-1}\circ L_n\to k$. Let $\phi_n=\gamma_n^{-1}\circ \delta_n$. Then
\begin{align*}
\omega (\phi_n(k(0)),h(0)) 
&\leq \omega (\phi_n(k(0)),\phi_n(\delta_n^{-1}(L_n(0))))+\omega (\phi_n(\delta_n^{-1}(L_n(0))),h(0))\\
& = \omega(k(0),\delta_n^{-1}(L_n(0)))+\omega(\gamma_n^{-1}(L_n(0)),h(0)).
\end{align*}
Hence $\omega (\phi_n(k(0)),h(0))\to 0$, so $\{\phi_n\}$ is relatively compact. It follows that it has a subsequence converging to $\phi\in \Aut(\D)$. Now, $\gamma_n^{-1}\circ L_n = \phi_n\circ (\delta_n^{-1}\circ L_n)$, so $h=\phi\circ k$, as required. 
\end{proof}

\begin{remark}
\label{rem:h0}
The function $h\in\Hol(\D,\D)$ built in the previous proof is such that $h(0)=0$. 
\end{remark}

\begin{definition}
\label{def:lefts}
Let $\{f_n\}\subset\Hol(\D,\D)$ and $L_n=f_n\circ f_{n-1}\circ \dots\circ f_1$. A function $h\in\Hol(\D,\D)$ is called a \emph{left straightening} of $\{f_n\}$ if $h(0)=0$ and there is a sequence $\{\gamma_n\}$ of conformal automorphisms of~$\D$ with $\gamma_n^{-1}\circ L_n \to h$. In the particular case when $f_n= f$ for all $n\in\mathbb{N}$, we say that $h$ is a left straightening of $f$.
\end{definition}

We saw in the introduction that $L_n^\#\to h^\#$ (Corollary~\ref{corollaryB}) and 
\begin{equation}
\lim_{n\to+\infty}\omega\bigl(L_n(z),L_n(w)\bigr)=\omega\bigl(h(z),h(w)\bigr)\;,
\label{eq:C}
\end{equation}
for $z,w\in\mathbb{D}$ (Corollary~\ref{corollaryC}). In light of Remark~\ref{rem:h0}, we can now assume that $h(0)=0$, so $h$ is a left straightening of $\{f_n\}$.

We also noted in the introduction that Corollary~\ref{corollaryC} has applications to the theory of iteration of a single holomorphic self-map of $\mathbb{D}$. Here we expand on these applications. Given $f\in\Hol(\mathbb{D},\mathbb{D}))$ and $\mu\in\mathbb{N}$, the \emph{hyperbolic $\mu$-step} $s^f_\mu$ of $f$ is defined by the limit
\[
s^f_\mu(z)=\lim_{n\to+\infty}\omega\bigl(f^n(z),f^{n+\mu}(z)\bigr)\;;
\] 
see \cite{Ab2022}*{Section 4.6} and references therein. We can use the left straightening to compute $s^f_\mu$.

\begin{corollary}
\label{th:mustep}
Let $f\in\Hol(\D,\D)$ and $\mu\in\mathbb{N}$. Then for every $z\in\D$ we have
\[
s^f_\mu(z)=\omega\bigl(h(z),h\bigl(f^\mu(z)\bigr)\bigr)\;,
\]
where $h$ is a left straightening of $f$.
\end{corollary}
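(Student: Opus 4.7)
The plan is to recognise that $s^f_\mu(z)$ is exactly the kind of limit computed by Corollary~\ref{th:C}, so the statement should follow almost immediately from that corollary applied to the constant sequence $f_n = f$.

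More precisely, I would first note that $L_n = f^n$ is the left iterated function system in $\Hol(\D,\D)$ generated by the sequence $\{f_n\}$ with $f_n = f$ for every $n$. By Theorem~\ref{theoremA}, this sequence admits a left straightening $h \in \Hol(\D,\D)$, and by Corollary~\ref{th:C} we have
\[
\lim_{n\to+\infty}\omega\bigl(L_n(z),L_n(w)\bigr) = \omega\bigl(h(z),h(w)\bigr)
\]
for every $z,w \in \D$.

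Next I would apply this identity with $w = f^\mu(z)$. On the left-hand side we get
\[
\lim_{n\to+\infty}\omega\bigl(f^n(z),f^n(f^\mu(z))\bigr) = \lim_{n\to+\infty}\omega\bigl(f^n(z),f^{n+\mu}(z)\bigr) = s^f_\mu(z),
\]
since $f^n \circ f^\mu = f^{n+\mu}$ and the right-hand side is, by definition of the hyperbolic $\mu$-step, exactly $s^f_\mu(z)$. On the right-hand side we obtain $\omega\bigl(h(z),h(f^\mu(z))\bigr)$, which gives the desired formula.

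There is no real obstacle here: the work was already done in Theorem~\ref{theoremA} (existence of the left straightening) and Corollary~\ref{th:C} (the convergence formula). The only minor point worth mentioning is that the limit defining $s^f_\mu(z)$ indeed exists because it arises as a special case of the left-hand side of \eqref{eq:C}; this also incidentally reconfirms the existence of the hyperbolic $\mu$-step for every $f\in\Hol(\D,\D)$.
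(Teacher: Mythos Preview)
Your proof is correct and follows exactly the same approach as the paper: the paper's proof is the single line ``It follows from \eqref{eq:C} applied with $w=f^\mu(z)$,'' which is precisely what you do, just written out in more detail.
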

\begin{proof}
This follows from \eqref{eq:C} applied with $w=f^\mu(z)$.
\end{proof}

We recall that a holomorphic map $f\in\Hol(\D,\D)\setminus\Aut(\D)$ falls in one of the following three classes:
\begin{enumerate}
\item $f$ is \emph{elliptic} if it has a fixed point in~$\D$;
\item $f$ is \emph{parabolic} if it has no fixed points in $\D$ and $f'(\tau_f)=1$;
\item $f$ is \emph{hyperbolic} if it has no fixed points in $\D$ and $0<f'(\tau_f)<1$.
\end{enumerate} 
Here $\tau_f\in\partial\D$ is the Wolff point of~$f$ and $f'(\tau_f)$ is the angular derivative of~$f$ at~$\tau_f$, which necessarily belongs to the interval~$(0,1]$. 

Left straightenings relate closely to this classification. 

\begin{proposition}
\label{th:lsclass}
Let $f\in\Hol(\D,\D)$. 
\begin{enumerate}
\item The map $f$ is an automorphism if and only if a (and hence any) left straightening of $f$ is an automorphism.
\item The map $f$ is either elliptic or parabolic with zero hyperbolic $1$-step if and only if a (and hence any) left straightening of $f$ is constant.
\item The map $f$ is either hyperbolic or parabolic with positive hyperbolic $1$-step if and only if a (and hence any) left straightening of $f$ is not constant and not an automorphism. 
\end{enumerate}
\end{proposition}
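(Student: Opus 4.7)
The plan is to reduce the proposition to the identity
\[
s^f_1(z) = \omega\bigl(h(z), h(f(z))\bigr), \quad z\in\D,
\]
supplied by Corollary~\ref{th:mustep} with $\mu=1$, together with the convergence $L_n^\#\to h^\#$ from Corollary~\ref{th:B} and the standard classification of non-automorphism self-maps of~$\D$ via hyperbolic $1$-step.

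For part~(i), if $f\in\Aut(\D)$ one may take $\gamma_n=f^n$, so $\gamma_n^{-1}\circ L_n=\id_\D$, and by the uniqueness of $h$ up to left composition with $\Aut(\D)$ in Theorem~\ref{theoremA} every left straightening is then an automorphism. Conversely, if some left straightening $h$ of $f$ is an automorphism, then $h^\#\equiv 1$ by Lemma~\ref{th:chain}(ii), and combining Corollary~\ref{th:B} with the chain rule (Lemma~\ref{th:chain}(iv)) gives
\[
L_n^\#(z) = \prod_{k=0}^{n-1} f^\#\bigl(f^k(z)\bigr) \longrightarrow h^\#(z)=1.
\]
Since each factor lies in $[0,1]$, every factor must equal~$1$; in particular $f^\#\equiv 1$, so Lemma~\ref{th:chain}(iii) forces $f$ to be a covering map $\D\to\D$ and hence an automorphism.

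For parts~(ii) and~(iii), assume $f\notin\Aut(\D)$, so no left straightening of $f$ is an automorphism by~(i). The displayed identity above shows $h\circ f=h$ if and only if $s^f_1\equiv 0$, and the classification of non-automorphism self-maps of~$\D$ (see \cite{Ab2022}*{Section~4.6}) characterises $s^f_1\equiv 0$ as meaning that $f$ is elliptic or parabolic with zero hyperbolic $1$-step. The forward direction of~(ii) is immediate: if $h$ is constant then $s^f_1\equiv 0$, hence $f$ is elliptic or parabolic zero-step. For the converse, suppose $h\circ f=h$. If $f$ is elliptic with Denjoy--Wolff fixed point $z_0\in\D$, then $f^n(z)\to z_0$, and iterating the functional equation together with continuity of~$h$ gives $h(z)=\lim_n h(f^n(z))=h(z_0)$ for every $z\in\D$, so $h$ is constant. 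If instead $f$ is parabolic with zero $1$-step, I would invoke the classical orbit-collapsing fact (see \cite{Ab2022}*{Section~4.6})
\[
\omega\bigl(f^n(z_1), f^n(z_2)\bigr) \longrightarrow 0 \quad \text{for all } z_1, z_2\in\D,
\]
and combine it with $h\circ f^n=h$ and the semicontraction property of~$h$ to obtain
\[
\omega\bigl(h(z_1), h(z_2)\bigr) = \omega\bigl(h(f^n(z_1)), h(f^n(z_2))\bigr) \leq \omega\bigl(f^n(z_1), f^n(z_2)\bigr) \to 0,
\]
again forcing $h$ to be constant. Part~(iii) then follows by exclusion from~(i) and~(ii): $h$ is neither constant nor an automorphism precisely when $f$ is neither an automorphism, elliptic, nor parabolic zero-step, that is, $f$ is hyperbolic or parabolic with positive $1$-step.

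The main obstacle is the orbit-collapsing statement for parabolic zero-step maps, which is classical but non-elementary; \cite{Ab2022}*{Section~4.6} is the natural reference. Everything else amounts to weaving together the hyperbolic $1$-step formula, the dynamical classification, and the convergence $L_n^\#\to h^\#$.
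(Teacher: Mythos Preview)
Your proof is correct. The overall architecture matches the paper's --- part~(iii) is obtained by exclusion from~(i) and~(ii), and the parabolic zero-step case of~(ii) is handled exactly as in the paper via the orbit-collapsing result \cite{Ab2022}*{Corollary~4.6.9(iv)} combined with Corollary~\ref{th:C} (you phrase it through $h\circ f^n=h$ and semicontraction, but this is the same computation).

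Two pieces differ from the paper and are worth noting. For the converse of~(i), the paper argues via Corollary~\ref{th:mustep}: if $h$ is an automorphism then $s^f_1(z)=\omega(z,f(z))$, which is impossible for $f\notin\Aut(\D)$ since then $\omega(f^n(z),f^{n+1}(z))$ is strictly decreasing. Your route via $L_n^\#\to h^\#\equiv 1$ and the monotone product $\prod_k f^\#(f^k(z))$ is a clean alternative that avoids the hyperbolic-step machinery entirely; it gives $f^\#\equiv 1$ directly. For the elliptic case of~(ii), the paper simply takes $\gamma_n=\id_\D$ and uses $f^n\to z_0$ to exhibit a constant limit, whereas you pass through $h\circ f=h$ (having first noted $s^f_1\equiv 0$) and then iterate. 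Both arguments are short; the paper's is slightly more direct since it does not require first establishing the functional equation.

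One small point of presentation: when you write ``for the converse, suppose $h\circ f=h$'', you are implicitly using that elliptic maps have $s^f_1\equiv 0$. You do cite the classification for this, but it might be cleaner to state explicitly that the hypothesis ``$f$ elliptic or parabolic zero-step'' gives $s^f_1\equiv 0$ (via \cite{Ab2022}*{Corollary~4.6.9}) before invoking $h\circ f=h$.
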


\begin{proof}
(i) By Corollary~\ref{corollaryB} we have that $(f^n)^\#(z)\to h^\#(z)$, for $z\in\mathbb{D}$, where $h$ is a left straightening of $f$. Since $(f^n)^\#(z)=f^\#(f^{n-1}(z))(f^{n-1})^\#(z)$, the sequence $\{(f^n)^\#(z)\}$ is decreasing. From Lemma~\ref{th:chain} we see that $f^\#(z)=1$ for any (and hence all) $z\in\mathbb{D}$ if and only if $h^\#(z)=1$ for any (and all) $z\in\mathbb{D}$. Therefore $f\in \Aut(\mathbb{D})$ if and only if $h\in\Aut(\mathbb{D})$.

(ii) Assume now that $f$ is elliptic and not an automorphism. Then $\{f^n\}$ converges to the constant map~$z_0$. By taking $\gamma_n=\id_\D$ for each $n\in\mathbb{N}$, we see that $\{\gamma_n^{-1}\circ f^n\}$ also converges to the constant map~$z_0$. Hence, by the uniqueness statement of Theorem~\ref{theoremA}, any left straightening of $f$ is constant.

Assume, instead, that $f$ is parabolic with zero hyperbolic $1$-step, and let $h$ be a left straightening of~$f$. Then Corollary~\ref{corollaryC} and \cite{Ab2022}*{Corollary 4.6.9.(iv)} yield
\[
\omega\bigl(h(z),h(w)\bigr)=\lim_{n\to+\infty}\omega\bigl(f^n(z),f^n(w)\bigr)=0\;,
\]
for all $z,w\in\D$, so again $h$ is constant.

Conversely, if $h$ is constant then, by Corollary~\ref{th:mustep}, $f$ has zero hyperbolic $1$-step and thus cannot be either hyperbolic (by \cite{Ab2022}*{Corollary 4.6.9.(ii)})
or parabolic with positive hyperbolic $1$-step (by definition). Given (i), we see that $f$ is either elliptic or parabolic with zero hyperbolic $1$-step.

(iii) This follows from (i) and (ii), because there are no other possibilities.
\end{proof}

Corollary~\ref{th:limBenal} (in the next section)  has another characterization of functions with constant left straightening, expressed in terms of the hyperbolic distortion.

\begin{remark}
\label{rem:lsmus}
Corollary~\ref{th:mustep} and the properties of the hyperbolic $\mu$-step yield some interesting relationships between $f$ and any left straightening $h$ when $f$ is hyperbolic or parabolic with positive hyperbolic $1$-step. For instance, since the hyperbolic $1$-step is either identically zero or never vanishing (by \cite{Ab2022}*{Corollary 4.6.9.(i)}), if $f$ is hyperbolic or parabolic with positive hyperbolic $1$-step then $h\bigl(f(z)\bigr)\ne h(z)$ for all $z\in\D$. 

Furthermore, by combining Lemma 4.6.4, Proposition 4.6.6, and Corollary 4.6.9 from \cite{Ab2022} with Corollary~\ref{th:mustep} we get that if $f$ is hyperbolic or parabolic then
\[
\inf_{z\in\D}\omega\bigl(h(z),h\bigl(f(z)\bigr)\bigr)=\lim_{\mu\to +\infty}\frac{\omega\bigl(h(z),h\bigl(f^\mu(z)\bigr)\bigr)}{\mu}=\frac{1}{2}\log\frac{1}{f'(\tau_f)}\;,
\]
where the middle limit is independent of $z\in\D$. 

Finally, \cite{Ab2022}*{Lemma 4.6.4} implies that, for every $z\in\D$, the sequence $\bigl\{\omega\bigl(h(z),h\bigl(f^n(z)\bigr)\bigr)\bigr\}$ is subadditive.
\end{remark}

We recall from the introduction that a map $f\in\Hol(\D,\D)$ is \emph{left semiconjugate} to another map $g\in\Hol(\D,\D)$ if there is a nonconstant map $\phi\in\Hol(\D,\D)$ such that $\phi\circ f=g\circ \phi$. The following corollary (of Theorem~\ref{theoremA}) is a more general version of Corollary~\ref{corollaryD}.

\begin{corollary}
\label{th:D}
Let $f\in\Hol(\D,\D)$. The following statements are equivalent.
\begin{enumerate}
\item The map $f$ is left semiconjugate in $\Hol(\D,\D)$ to an automorphism of $\D$.
\item Any left straightening of $f$ is nonconstant.
\item The map $f$ is either an automorphism or else it is hyperbolic or parabolic with positive hyperbolic 1-step.
\end{enumerate}
\end{corollary}
\begin{proof}
By Theorem~\ref{theoremA} we know that there is a sequence $\{\gamma_n\}$ of automorphisms of $\D$ such that $\gamma_n^{-1}\circ f^n\to h$, where $h$ is a left straightening of~$f$.

First we prove that (ii) implies (i). Suppose that $h$ is nonconstant. Let $g_n=\gamma_n^{-1}\circ f^n$ and  $\phi_n=\gamma_{n}^{-1}\circ \gamma_{n+1}$. Then $g_n \circ f = \phi_n\circ g_{n+1}$. Observe that
\[
\begin{aligned}
\omega\bigl(\phi_n\bigl(h(0)\bigr),h\bigl(f(0)\bigr)\bigr) 
&\leq \omega\bigl(\phi_n\bigl(h(0)\bigr),\phi_n\bigl(g_{n+1}(0)\bigr)\bigr)+\omega(g_n\bigl(f(0)\bigr),h\bigl(f(0)\bigr)\bigr)\\
&=\omega\bigl(h(0),g_{n+1}(0)\bigr)+\omega\bigl(g_n\bigl(f(0)\bigr),h\bigl(f(0)\bigr)\bigr)\;.
\end{aligned}
\]
Since $g_n\to h$, the right-hand side is uniformly bounded. It follows from Theorem~\ref{theorem1} that the sequence $\{\phi_n\}$ is relatively compact, so it admits a subsequence that converges to an automorphism $\phi$ of $\D$. Then $h \circ f=\phi\circ h$, so $f$ is left semiconjugate to $\phi$.

Next we prove that (i) implies (ii). Suppose that $\psi \circ f = \chi \circ \psi$, where $\psi\in\Hol(\D,\D)$ is nonconstant and $\chi\in\Aut(\D)$. Choose $z,w\in\D$ with $\psi(z)\neq \psi(w)$. Observe that
\begin{align*}
\omega\bigl(\gamma_n^{-1}\circ f^n(z),\gamma_n^{-1}\circ f^n(w)\bigr)
&=\omega\bigl(f^n(z),f^n(w)\bigr)\\
&\geq \omega\bigl(\psi\bigl(f^n(z)\bigr),\psi\bigl(f^n(w)\bigr)\bigr)\\
&=\omega\bigl(\chi^n\bigl(\psi(z)\bigr),\chi^n\bigl(\psi(w)\bigr)\bigr)\\
&=\omega\bigl(\psi(z),\psi(w)\bigr)>0
\end{align*}
and $\omega\bigl(\gamma_n^{-1}\circ f^n(z),\gamma_n^{-1}\circ f^n(w)\bigr)\to \omega\bigl(h(z),h(w)\bigr)$. Hence $h(z)\ne h(w)$ and $h$ is not constant, as claimed.

The equivalence of (ii) and (iii) follows from Proposition~\ref{th:lsclass}.
\end{proof}

\section{Necessary and sufficient conditions for nonconstant limits}\label{section5}

In this section we prove Theorem~\ref{theoremE} and some related results.

\begin{proposition}
\label{th:Benal}
Let $\{f_n\}\subset\Hol(\D,\D)$. Then any left straightening of $\{f_n\}$ is constant if and only if
\[
\sum_{n=1}^\infty \bigl(1-f^\#_n\bigl(L_{n-1}(z)\bigr)\bigr)=+\infty
\]
for some (and hence all) $z\in\D$.
\end{proposition}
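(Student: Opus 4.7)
The plan is to extract an infinite-product representation of $h^\#$ via the chain rule and then compare it at different basepoints. By Lemma~\ref{th:chain}(iv) applied inductively, for every $n\ge 1$ and every $z\in\D$,
\[
L_n^\#(z) = \prod_{k=1}^{n} f_k^\#\bigl(L_{k-1}(z)\bigr).
\]
Let $h$ be any left straightening of $\{f_n\}$; by Corollary~\ref{th:B} the sequence $\{L_n^\#\}$ converges (locally uniformly) to $h^\#$, so passing to the limit gives
\[
h^\#(z) = \prod_{k=1}^{\infty} f_k^\#\bigl(L_{k-1}(z)\bigr)\qquad\text{for every }z\in\D.
\]

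Next I would show that the convergence of the series $S(z):=\sum_n\bigl(1-f_n^\#(L_{n-1}(z))\bigr)$ does not depend on $z$. Applying Corollary~\ref{corollary1} to $f_n$ at the points $L_{n-1}(z_0)$ and $L_{n-1}(w)$, and then invoking the Schwarz--Pick contraction $\omega\bigl(L_{n-1}(z_0),L_{n-1}(w)\bigr)\leq\omega(z_0,w)$, one obtains
\[
1-f_n^\#\bigl(L_{n-1}(z_0)\bigr)\leq 2e^{4\omega(z_0,w)}\bigl(1-f_n^\#\bigl(L_{n-1}(w)\bigr)\bigr).
\]
Summing over $n$ and using the symmetric bound gives $S(z_0)<+\infty$ for some $z_0\in\D$ if and only if $S(z)<+\infty$ for every $z\in\D$, which takes care of the ``some (and hence all)'' clause.

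To complete the equivalence I would link divergence of $S$ to constancy of $h$. Since $h(0)=0$ by Remark~\ref{rem:h0}, $h$ is constant precisely when $h'\equiv 0$, equivalently when $h^\#\equiv 0$. Let $E\subset\D$ be the set on which at least one factor $f_k^\#(L_{k-1}(\cdot))$ vanishes; because each $f_k\circ L_{k-1}$ is holomorphic, its critical set is discrete, and hence $E$ is a countable union of discrete sets. Fix any $z_0\in\D\setminus E$; then every factor of the product $\prod_k f_k^\#(L_{k-1}(z_0))$ is strictly positive, and standard facts about infinite products with factors in $(0,1]$ yield $h^\#(z_0)>0$ if $S(z_0)<+\infty$, and $h^\#(z_0)=0$ if $S(z_0)=+\infty$. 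In the first case $h$ is non-constant; in the second case $h'=0$ on the open dense set $\D\setminus E$, so by the identity principle $h'\equiv 0$ and $h$ is constant.

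The one delicate point is the possibility that some factor $f_k^\#(L_{k-1}(z))$ might vanish at a critical point of the partial composition: at such a $z$, vanishing of $h^\#(z)$ carries no information about the constancy of $h$. The comparability estimate of the second step is precisely what lets us move to a generic basepoint $z_0\in\D\setminus E$ where the product formula becomes a clean dichotomy.
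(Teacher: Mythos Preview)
Your argument is correct and takes a genuinely different route from the paper's. The paper reduces the question to the statement $\lim_n\omega\bigl(L_n(z),L_n(w)\bigr)=0$ for all $z,w$ (via Corollary~\ref{th:C}) and then invokes \cite{BeEvFaRiSt2022}*{Theorem~2.1 and Corollary~2.2} as a black box, after conjugating by automorphisms $\gamma_n$ that send $L_n(z)$ to~$0$. You instead stay entirely within the paper: the chain rule plus Corollary~\ref{th:B} give the product formula $h^\#(z)=\prod_k f_k^\#\bigl(L_{k-1}(z)\bigr)$, the comparison Corollary~\ref{corollary1} combined with Schwarz--Pick handles the basepoint independence, and then the standard dichotomy for infinite products with factors in $(0,1]$ finishes the job. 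Your approach is more self-contained and makes the mechanism (namely, that $h^\#$ is literally the infinite product) completely transparent; the paper's approach is shorter on the page because the work is hidden in the citation.

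Two small corrections. First, $\D\setminus E$ is not open in general; what you have is that $E$ is a countable union of discrete (hence countable) sets, so $E$ is countable and $\D\setminus E$ is merely dense. That is enough: $h'$ is continuous, so $h'\equiv 0$ on a dense set forces $h'\equiv 0$. (In fact, for the direction ``$S(z)=+\infty\Rightarrow h$ constant'' you do not even need to pass to $\D\setminus E$: at any $z$, either some factor vanishes and the product is~$0$, or all factors are positive and the divergent sum forces the product to~$0$; either way $h^\#(z)=0$.) Second, your appeal to $h(0)=0$ is unnecessary: $h$ constant is equivalent to $h^\#\equiv 0$ regardless. Your use of $\D\setminus E$ is essential only for the converse direction, and there it tacitly assumes each $f_k\circ L_{k-1}$ is nonconstant (else $E=\D$); this is the same implicit nondegeneracy that the cited result in the paper's proof carries.
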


\begin{proof}
Let $h$ be a left straightening of $\{f_n\}$. Theorem~\ref{theoremA} implies that
\[
\omega\bigl(L_n(z),L_n(w)\bigr)\to\omega\bigl(h(z),h(w)\bigr)
\]
for all $z,w\in\D$. Thus $h$ is constant if and only if
\[
\lim_{n\to +\infty}\omega\bigl(L_n(z),L_n(w)\bigr)=0
\]
for all $z,w\in\D$. The assertion then follows from \cite{BeEvFaRiSt2022}*{Theorem~2.1 and Corollary 2.2} applied to $g_n=\gamma_n\circ f_n\circ\gamma_{n-1}^{-1}$,
where, after fixing a point $z\in\D$, the automorphism $\gamma_n\in\Aut(\D)$ is chosen such that $\gamma_0(z)=0$ and $\gamma_n\bigl(L_n(z)\bigr)=0$, for $n\in\mathbb{N}$. Notice that the hyperbolic distortion used in  \cite{BeEvFaRiSt2022} coincides with ours thanks to Theorem~\ref{th:hddist}.
\end{proof}

\begin{corollary}
\label{th:limBenal}
Let $f\in\Hol(\D,\D)$. Then the following statements are equivalent.
\begin{enumerate}
\item Any left straightening of $f$ is constant.
\item The map $f$ is either elliptic or parabolic with zero hyperbolic $1$-step.
\item We have
\[
\sum_{n=1}^\infty\bigl(1-f^\#\bigl(f^{n-1}(z)\bigr)\bigr)=+\infty
\]
for some (and hence all) $z\in\D$.
\end{enumerate}
\end{corollary}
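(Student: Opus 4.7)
The plan is to derive all three equivalences directly from results already established earlier in the paper, applying them to the constant sequence $f_n\equiv f$. The corollary is essentially a specialisation of Proposition~\ref{th:Benal} combined with the classification in Proposition~\ref{th:lsclass}, so no genuinely new argument is needed.

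First I would observe that the equivalence (i)$\Longleftrightarrow$(ii) is already contained in Proposition~\ref{th:lsclass}(ii): that proposition says that $f$ is either elliptic or parabolic with zero hyperbolic $1$-step precisely when some (and hence any) left straightening of $f$ is constant. So this part requires nothing more than a citation.

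For the equivalence (i)$\Longleftrightarrow$(iii), I would apply Proposition~\ref{th:Benal} to the constant sequence $\{f_n\}$ defined by $f_n=f$ for every $n\in\mathbb{N}$. In that setting $L_n=f^n$, so $L_{n-1}(z)=f^{n-1}(z)$, and the divergence condition from Proposition~\ref{th:Benal} reads
\[
\sum_n\bigl(1-f^\#\bigl(f^{n-1}(z)\bigr)\bigr)=+\infty,
\]
which is exactly (iii). Proposition~\ref{th:Benal} then tells us that this happens for some (and hence all) $z\in\D$ if and only if any left straightening of the sequence $\{f_n\}$---which by Definition~\ref{def:lefts} is a left straightening of $f$---is constant. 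This gives (i)$\Longleftrightarrow$(iii).

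There is no real obstacle here, as the work has been done in Proposition~\ref{th:lsclass} and Proposition~\ref{th:Benal}; the only minor point worth spelling out is that applying Proposition~\ref{th:Benal} with $f_n\equiv f$ is legitimate and that the ``some/all'' quantification on $z$ transfers correctly, which is immediate from the statement of that proposition. The proof is therefore short: cite Proposition~\ref{th:lsclass}(ii) for (i)$\Longleftrightarrow$(ii), and Proposition~\ref{th:Benal} applied to $f_n\equiv f$ for (i)$\Longleftrightarrow$(iii).
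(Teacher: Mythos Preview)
Your proposal is correct and matches the paper's own proof, which simply reads ``It follows from Propositions~\ref{th:lsclass} and~\ref{th:Benal}.'' You have merely spelled out in more detail how each proposition is invoked, which is fine.
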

\begin{proof}
This follows from Propositions~\ref{th:lsclass} and \ref{th:Benal}.
\end{proof}

\edit{Next we prove Theorem~\ref{theoremE}. That theorem asserts the equivalence of four statements (i) to (iv) for a hyperbolic Riemann surface $X$. Our proof shows that, additionally, in the case $X=\mathbb{D}$ these four statements are equivalent to the fifth statement 
\begin{enumerate}
\item[\textnormal{(v)}] \emph{Any left straightening of $\{f_n\}$ is nonconstant.}
\end{enumerate}
}

\begin{proof}[Proof of Theorem~\ref{theoremE}]
Since (i)$\Longrightarrow$(ii) is trivial and (iii)$\Longrightarrow$(iv) follows from Corollary~\ref{th:3.6}, it suffices to prove (ii)$\Longrightarrow$(iii) and (iv)$\Longrightarrow$(i).
Along the way, we shall also show that, when $X=\D$, (ii) implies (v) and (v) implies (iii).

By the chain rule (Lemma~\ref{th:chain}), we have
\[
L^\#_n(z) = \prod_{j=1}^n f_j^\#\bigl(L_{j-1}(z)\bigr),
\]
for any $z\in\D$. Since $L^\#_n(z)=f^\#_n\bigl(L_{n-1}(z)\bigr)L^\#_{n-1}(z)$ and $f^\#_n\leq 1$, the sequence $\{L^\#_n(z)\}$ is monotonic, so it converges to a finite nonnegative number. Consequently, if $F\in\Hol(X,X)$ is a limit point of the sequence $\{L_n\}$, then we have
\begin{equation}
F^\#(z)=\prod_{n=1}^{\infty} f_n^\#\bigl(L_{n-1}(z)\bigr)\;.
\label{eq:infprd}
\end{equation}
Assume that (ii) holds; then we can choose a nonconstant limit point $F$. In particular, there is $z_0\in\D$ such that
$F^\#(z_0)\neq 0$. Then $f_n^\#\bigl(L_{n-1}(z_0)\bigr)\neq 0$ for all $n$, and \eqref{eq:infprd} implies that 
\[
\sum_{n=1}^\infty \bigl(1-f_n^\#\bigl(L_{n-1}(z_0)\bigr)\bigr)<+\infty\;.
\] 
If $X=\D$, then, by Proposition~\ref{th:Benal}, this is equivalent to (v). Furthermore,
since $\{L_{n-1}(z_0)\}$ is relatively compact in $X$, Corollary~\ref{th:3.6} implies that $\sum_n \bigl(1-f_n^\#(z_0)\bigr)<+\infty$, which is (iii).

Assume finally that (iv) holds. Suppose, by contradiction, that $\{L_n\}$ has a constant limit point $F$. Choose $z_1\in X$ and let $z_n=L_{n-1}(z_1)$ for $n>1$. 
Since $F^\#(z_1)=0$, \eqref{eq:infprd} implies that
\[
\sum_{n=1}^\infty \bigl(1-f_n^\#(z_n)\bigr)=+\infty\;.
\] 
But $\{z_n\}$ is relatively compact in~$X$; therefore Corollary~\ref{th:3.6} implies that $\sum_n \bigl(1-f_n^\#(z_1)\bigr)=+\infty$, which gives statement (iv).
\end{proof}

In Section~\ref{section8} we shall give an example of a left iterated function system that is neither relatively compact nor compactly divergent.

\section{Hyperbolic distortion inequality}\label{section6}

In this section we prove Theorem~\ref{theoremF}. To prove this theorem and the next lemma we use the following formulas for the hyperbolic metric (see, for example, \cite{Ab2022}*{Proposition 1.3.10}):
\[
\begin{aligned}
\sinh \omega(z,w) &= \frac{|z-w|}{\sqrt{(1-|z|^2)(1-|w|^2)}}\;,\\
 \cosh \omega(z,w) &= \frac{|1-z\overline{w}|}{\sqrt{(1-|z|^2)(1-|w|^2)}}\;. 
 \end{aligned}
\]

\begin{lemma}
\label{lemmaB}
Let $z$,~$w\in\mathbb{D}$. Then 
\[
|z-w| \leq 2(1-|w|)\sinh\bigl(2\omega(z,w)\bigr)\;.
\]
\end{lemma}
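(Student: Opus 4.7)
The plan is to convert the inequality into an equivalent purely algebraic statement using the explicit formulas for $\sinh\omega(z,w)$ and $\cosh\omega(z,w)$ recalled just before the lemma, and then to bound the resulting expression by two simple estimates.

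First I would use the double-angle identity $\sinh(2t) = 2\sinh(t)\cosh(t)$ together with the two closed-form expressions to compute
\[
2(1-|w|)\sinh\bigl(2\omega(z,w)\bigr) = 4(1-|w|)\sinh\omega(z,w)\cosh\omega(z,w) = \frac{4(1-|w|)|z-w||1-z\bar w|}{(1-|z|^2)(1-|w|^2)}.
\]
Cancelling the common factor $1-|w|$ in the denominator $1-|w|^2 = (1-|w|)(1+|w|)$ and then dividing both sides of the target inequality by $|z-w|$ (the inequality is trivial when $z=w$), I would reduce the claim to the purely algebraic inequality
\[
(1-|z|^2)(1+|w|) \leq 4|1-z\bar w|.
\]

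The final step is to bound each factor on the left separately. Writing $1-|z|^2 = (1-|z|)(1+|z|)$ and using $1+|z|\leq 2$ gives $1-|z|^2 \leq 2(1-|z|)$. Since $|z\bar w|\leq |z|$, the reverse triangle inequality yields $|1-z\bar w|\geq 1-|z\bar w|\geq 1-|z|$, so $1-|z|^2 \leq 2|1-z\bar w|$. Combined with the trivial bound $1+|w|\leq 2$, this gives $(1-|z|^2)(1+|w|) \leq 4|1-z\bar w|$, as required.

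There is no real obstacle here: the only point that requires any thought is the reduction step, and once the formula for $\sinh(2\omega(z,w))$ is written out the rest is just two elementary bounds. The proof is quite short and does not need any auxiliary lemma beyond the formulas for the hyperbolic sine and cosine given at the start of the section.
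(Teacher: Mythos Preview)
Your proof is correct and follows essentially the same route as the paper: both compute $\sinh(2\omega(z,w))$ via the double-angle formula and the explicit expressions for $\sinh\omega$ and $\cosh\omega$, then apply the elementary bounds $1+|z|\leq 2$, $1+|w|\leq 2$, and $|1-z\bar w|\geq 1-|z||w|\geq 1-|z|$. The only difference is organizational---the paper keeps a descending chain of inequalities on $\sinh(2\omega(z,w))$, whereas you first isolate the algebraic inequality $(1-|z|^2)(1+|w|)\leq 4|1-z\bar w|$---but the underlying estimates are identical.
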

\begin{proof}
Observe that 
\[
\sinh \bigl(2\omega(z,w)\bigr)= 2\sinh\omega (z,w)\cosh\omega(z,w)
= \frac{2|z-w||1-z\overline{w}|}{(1-|z|^2)(1-|w|^2)}\;.
\]
Hence
\[
\sinh \bigl(2\omega(z,w)\bigr)\geq \frac{|z-w|(1-|z||w|)}{2(1-|z|)(1-|w|)}
\geq \frac{|z-w|}{2(1-|w|)}\;.
\]
The result follows on rearranging this inequality.
\end{proof}

We can now prove Theorem~\ref{theoremF}.

\begin{proof}[Proof of Theorem~\ref{theoremF}]
If $f\in\Aut(\D)$ we can choose $\gamma=f$, and the inequality is satisfied. Let us assume, then, that $f\notin\Aut(\D)$.

Suppose first that $w=0$ and $f(0)=0$. Assume also for the moment that $f^\#(0)=0$. Then we can choose $\gamma=\id_\D$, because
\[
\omega\bigl(f(z),z\bigr)\leq \omega\bigl(f(z),0\bigr)+\omega(z,0)\leq 2\omega(z,0) < 2e^{4\omega(z,0)}\;.
\]
Assume now that $f^\#(0)\neq 0$ (but still $w=0$ and $f(0)=0$). Then there is $g\in\Hol(\D,\D)$ such that $f(z)=zg(z)$ for all $z\in\D$; moreover, $|g(0)|=f^\#(0)\ne 0$. Let $\alpha=g(0)/|g(0)|$ and let $\gamma(z)=\alpha z$. Since $|z|\leq 1$ and $|g(z)|\leq 1$, we have
\[
\begin{aligned}
\sinh \omega\bigl(f(z),\gamma(z)\bigr)
&= \frac{|z||g(z)-\alpha|}{\sqrt{(1-|zg(z)|^2)(1-|z|^2)}}\\
&\leq \frac{1}{1-|z|^2}\bigl(|g(z)-g(0)|+|g(0)-\alpha|\bigr)\;.
\end{aligned}
\] 
Observe that $|g(0)-\alpha|=1-|g(0)|$. From Lemma~\ref{lemmaB} we have
\[
|g(z)-g(0)|\leq 2(1-|g(0)|)\sinh \bigl(2\omega\bigl(g(z),g(0)\bigr)\bigr)\leq 2(1-|g(0)|)\sinh \bigl(2\omega(z,0)\bigr)\;.
\]
Hence
\[
\begin{aligned}
\sinh \omega\bigl(f(z),\gamma(z)\bigr)
&\leq \frac{1}{1-|z|^2}\mleft(2\sinh\bigl(2\omega(z,0)\bigr)+1\mright)(1-|g(0)|)\\
&\leq e^{2\omega(0,z)}\mleft(2\sinh\bigl(2\omega(z,0)\bigr)+1\mright)(1-|g(0)|)\\
&\leq 2e^{4\omega(0,z)}(1-|g(0)|)\;.
\end{aligned} 
\]
Since $|g(0)|=f^\#(0)$, the result is now established in this special case.

Consider now any map $f\in\Hol(\D,\D)$ that is not an automorphism and any point $w\in\D$. Choose $\phi$,~$\psi\in \Aut(\D)$ with $\phi(0)=w$ and $\psi(0)=f(w)$. Let $g=\psi^{-1}\circ f\circ \phi$. Then $g(0)=0$ and the preceding argument tells us that we can find $\tilde\gamma\in\Aut(\D)$ with 
\[
\omega\bigl(g(z),\tilde\gamma(z)\bigr)\leq 2e^{4\omega(z,0)}\bigl(1-g^\#(0)\bigr)\;,
\]
for all $z\in\D$. Let $\gamma=\psi\circ\tilde\gamma\circ \phi^{-1} \in\Aut(\D)$. With $\zeta=\phi(z)$, we have 
\[
\omega\bigl(g(z),\tilde\gamma(z)\bigr)=\omega\bigl(f(\zeta),\gamma(\zeta)\bigr)\;.
\] 
Also, $\omega(z,0)=\omega(\zeta,w)$ and $g^\#(0)=f^\#(w)$. Hence
\[
\omega\bigl(f(\zeta),\gamma(\zeta))\leq 2e^{4\omega(\zeta,w)}\bigl(1-f^\#(w)\bigr)\;,
\]
for all $\zeta\in\D$, as required.
\end{proof}

\edit{
\begin{remark}
The referee observed that the automorphism $\gamma$ constructed in the proof of Theorem~\ref{theoremF} is the unique automorphism with $\gamma(w)=f(w)$ and for which the argument of $\gamma'(w)$ coincides with that of $f'(w)$. It can be shown by perturbing $\gamma$ that some other automorphisms also satisfy the inequality of Theorem~\ref{theoremF}; indeed, by optimising the proof, that inequality could be improved slightly (for example, the coefficient 2 could be reduced to $\tfrac54$).
\end{remark}
}

\begin{remark}
\label{rem:add}
Theorem~\ref{theoremF} implies that if $\{f_n\}\subset\Hol(\D,\D)$ satisfies $\sum_n\bigl(1-f_n^\#(w)\bigr)<+\infty$ for some point $w$ in $\D$, then there is a sequence $\{\gamma_n\}$ in $\Hol(\D,\D)$ with $\sum_n \omega\bigl(f_n(z),\gamma_n(z)\bigr) <+\infty$ for all $z\in\D$. Let $\Gamma_n=\gamma_n\circ \gamma_{n-1}\circ \dots\circ \gamma_1$; then
\[
\omega\bigl(\Gamma_{n-1}^{-1}\circ L_{n-1}(z),\Gamma_n^{-1}\circ L_n(z)\bigr)=\omega\bigl(\gamma_n\bigl(L_{n-1}(z)\bigr),f_n\bigl(L_{n-1}(z)\bigr)\bigr)\;.
\]
If (as in Theorem~\ref{theoremE}) $\{L_n\}$ is relatively compact in  $\Hol(\D,\D)$, then by Corollary~\ref{th:3.6} we obtain
\[
\sum_{n=1}^\infty \omega\bigl(\Gamma_{n-1}^{-1}\circ L_{n-1}(z),\Gamma_n^{-1}\circ L_n(z)\bigr)<+\infty\;,
\] 
and, consequently, there exists $h\in \Hol(\D,\D)$ with $\Gamma_n^{-1}\circ L_n\to h$. In this way we have recovered the outcome of Theorem~\ref{theoremA}.
\end{remark}

\edit{We finish this section by proving Corollary~\ref{corollaryG} as a consequence of Theorem~\ref{theoremF}. }

\begin{proof}[Proof of Corollary~\ref{corollaryG}]
Applying Theorem~\ref{theoremF} with $w=z_n$ we can find $\gamma_n\in\Aut(\D)$ with 
\[
\omega\bigl(f(z),\gamma_n(z)\bigr)\leq 2e^{4\omega(z,z_n)}\bigl(1-f^\#(z_n)\bigr)\;,
\]
for all $z\in\D$. From the inequality $\omega(z,z_n)\leq \omega(z,0)+\omega(0,z_n)$, we see that 
\[
\omega\bigl(f(z),\gamma_n(z)\bigr)\leq \frac{32}{(1-|z|)^2}\frac{1-f^\#(z_n)}{(1-|z_n|)^2}\;.
\]
The hypothesis of the corollary then tells us that $\gamma_n\to f$ and so $f\in \Aut(\D)$, as claimed.
\end{proof}

\section{Constant limits of left iterated function systems}\label{section7}

In this section we prove Theorems~\ref{theoremH} and~\ref{theoremI}. The next lemma is a particular case of the continuous dependence of the Wolff point on the corresponding map (see \cite{Heins1941} and \cite{Ab2022}*{Theorem~3.4.2}). 

\begin{lemma}
\label{lemma1b}
Let $f$ be a map in $\Hol(X,X)\setminus\{\id_X\}$, for a hyperbolic Riemann surface $X$, with a fixed point $a_0\in X$. Suppose that $\{f_n\}$ is a sequence in $\Hol(X,X)$ that converges to $f$. Then there exists $N\in\mathbb{N}$ such that if $n\ge N$ then each $f_n$ has a fixed point $a_n\in X$. Furthermore, we can choose each $a_n$ such that $a_n\to a_0$ as $n\to+\infty$.
\end{lemma}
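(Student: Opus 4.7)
The plan is to reduce to a local statement and apply Hurwitz's theorem. Choose a holomorphic chart $\phi\colon\Delta\to V\subseteq X$ with $\phi(0)=a_0$, where $\Delta\subset\C$ is a disc centered at the origin and $V$ is an open neighborhood of $a_0$ in $X$. By continuity of $f$ at $a_0$, I can find $r>0$ with $\overline{\Delta(0,r)}\subset\Delta$ such that $f\bigl(\phi(\overline{\Delta(0,r)})\bigr)\subset V$, and then transport $f$ to a holomorphic map $\tilde f=\phi^{-1}\circ f\circ\phi$ on $\overline{\Delta(0,r)}$ that fixes $0$.

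Consider the holomorphic function $g(z)=\tilde f(z)-z$. We have $g(0)=0$ and $g\not\equiv 0$, for if $g$ vanished identically on a neighborhood of $0$, then $f$ would agree with $\id_X$ on an open subset of the connected Riemann surface $X$, and the identity principle would force $f\equiv\id_X$, against hypothesis. Hence $0$ is an isolated zero of $g$, and after shrinking $r$ I may assume that $0$ is the only zero of $g$ in $\overline{\Delta(0,r)}$.

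Since $f_n\to f$ uniformly on the compact set $\phi\bigl(\overline{\Delta(0,r)}\bigr)$, for all $n$ sufficiently large the image $f_n\bigl(\phi(\overline{\Delta(0,r)})\bigr)$ is contained in $V$, so $\tilde f_n=\phi^{-1}\circ f_n\circ\phi$ is well defined on $\overline{\Delta(0,r)}$ and $\tilde f_n\to\tilde f$ uniformly there. Setting $g_n(z)=\tilde f_n(z)-z$, Hurwitz's theorem applied to $g_n\to g$ at the isolated zero $0$ of $g$ guarantees that, for all sufficiently large $n$, $g_n$ has at least one zero in $\Delta(0,r)$. Taking $\tilde a_n$ to be such a zero of smallest modulus and applying the same argument to each disc $\Delta(0,r/k)$ in turn, I obtain $\tilde a_n\to 0$; setting $a_n=\phi(\tilde a_n)$ then yields a fixed point of $f_n$ with $a_n\to a_0$.

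The main obstacle is purely bookkeeping: one must set up the chart so that the local lifts $\tilde f$ and $\tilde f_n$ are simultaneously defined on a common closed disc, which is handled by compactness together with $f_n\to f$ uniformly on compacta, and then choose the zeros $\tilde a_n$ coherently so that they converge to $0$, which is handled by applying Hurwitz to a sequence of shrinking discs. Once these are in place, Hurwitz supplies both the existence of the fixed points and the desired convergence, and the use of a single chart avoids any issue about coherent lifts to the universal cover.
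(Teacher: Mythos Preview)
Your proof is correct and follows essentially the same approach as the paper: localize in a chart around $a_0$, form the difference of the (pulled-back) map with the identity, and apply Hurwitz's theorem to a shrinking family of discs to obtain both existence and convergence of the fixed points. The only cosmetic difference is that the paper works with $\psi_\nu\bigl(f_n(z)\bigr)-\psi_\nu(z)$ on nested hyperbolic discs, whereas you conjugate by the chart first and take $\tilde f_n(z)-z$; the underlying argument is the same.
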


\begin{remark}
\label{rem:fixC}
The first assertion in Lemma~\ref{lemma1b} holds for the Riemann sphere, for the trivial reason that every holomorphic self-map of the Riemann sphere has a fixed point. It also holds when $X=\C$. To see this, assume that $\{f_n\}\subset\Hol(\C,\C)$ is a sequence of maps without fixed points that converges to a map $f\in\Hol(\C,\C)\setminus\{\id_\C\}$ with a fixed point. Since $f_n$ has no fixed points, the function $g_n=f_n-\id_\C$ has no zeros; moreover, $g_n\to f-\id_\C$. Since $f\neq\id_\C$, Hurwitz's theorem implies that $f$ cannot have fixed points, which is a contradiction.
\end{remark}

\begin{remark}
\label{rem:fp}
Let $X$ be a hyperbolic Riemann surface and suppose there exists $f\in\Hol(X,X)$ with two distinct fixed points; then $f=\id_X$ or $X$ is multiply connected and $f$ is a periodic automorphism of $X$ (see \cite{Ab2022}*{Corollary 3.1.16}).
\end{remark}

\begin{definition}
\label{def:hol0}
Let $X$ be a hyperbolic Riemann surface. We denote by $\Hol_0(X,X)$ the subset of $\Hol(X,X)\setminus\{\id_X\}$ of self-maps that have a fixed point in $X$. By Lemma~\ref{lemma1b}, $\Hol_0(X,X)$ is an open subset of $\Hol(X,X)$.
\end{definition}

We can now prove Theorem~\ref{theoremH}.

\begin{proof}[Proof of Theorem~\ref{theoremH}]
Observe that 
\[
\omega_X\bigl(f_n(a),a\bigr)\leq \omega_X\bigl(f_n(a),L_n(a)\bigr)+\omega_X\bigl(L_n(a),a\bigr)\leq \omega_X\bigl(a,L_{n-1}(a)\bigr)+\omega_X\bigl(L_n(a),a\bigr)\;.
\]
Hence $f_n(a)\to a$. 

We claim that $f_n\in\Hol_0(X,X)$ for all $n$ large enough. If not, then we can find a subsequence $\{f_{r_i}\}$ disjoint from $\Hol_0(X,X)$. 
Since $\{f_{r_i}\}$ is relatively compact in $\Hol(X,X)$ (because $f_n(a)\to a$), it has a subsequence $\{f_{s_j}\}$ that converges in $\Hol(X,X)$ to a map $f$ with $f(a)=a$. The hypothesis on $\mathscr{F}$ ensures that $f\neq\id_X$. Hence $f\in \Hol_0(X,X)$ and then, by Lemma~\ref{lemma1b}, $f_{s_j}\in\Hol_0(X,X)$ for $j$ large enough, which is a contradiction.

Thus we can find a positive integer $N$ for which $f_n$ has a fixed point $a_n\in X$, for $n\geq N$. In case $f_n$ has more than one fixed point, we define $a_n$ to be any one of the fixed points of $f_n$ that is closest to $a$.  Suppose that $a_n\nrightarrow a$. Then there is a subsequence $\{f_{r_i}\}$ of $\{f_n\}$ and a positive number $\varepsilon$ with $\omega_X(a_{r_i},a)>\varepsilon$, for all  $i\in\mathbb{N}$. Once again we choose a subsequence $\{f_{s_j}\}$ of $\{f_{r_i}\}$ that converges in $\Hol(X,X)$ to a map $f$; again, $f(a)=a$. By Lemma~\ref{lemma1b}, for sufficiently large values of $j$, the map $f_{s_j}$ has a fixed point $a_{s_j}$ and the resulting sequence of fixed points converges to $a$. This is impossible, because no fixed point of $f_{r_i}$ lies within a distance $\varepsilon$ of $a$. Hence, contrary to our assumption, we have $a_n\to a$, as required.
\end{proof}

\begin{remark}
If $X$ is not a disc, punctured disc, or annulus, then (see, for example, \cite{Ab2022}*{Theorem~2.6.2}) $\id_X$ is isolated in $\Hol(X,X)$, so the hypothesis $\id_X\notin \overline{\mathscr{F}}$ can be omitted.
\end{remark}

Let us now prove Theorem~\ref{theoremI}.

\begin{proof}[Proof of Theorem~\ref{theoremI}]
Let $z_0\in X$ and let $K$ be a compact disc centred at $z_0$ that contains the orbit $\{L_n(z_0)\}$ and all points $\{a_n\}$. We define $\ell_n=\sup_{z\in K} f_n^\#(z)$ and choose $q_n\in K$ such that $\ell_n=f_n^\#(q_n)$. 

Suppose that $\sup_n \ell_n =1$. Then we can find a subsequence $\{n_i\}$ with $q_{n_i}\to q$, $f_{n_i}^\#(q_{n_i})\to 1$, and $f_{n_i}\to f$, where $f\in \Hol(X,X)$. By Lemma~\ref{th:chain}, we have $f^\#(q)=1$, so $f$ is a self-covering of $X$; moreover, $f(a)=a$. It follows from \cite{Ab2022}*{Corollary~3.1.15} that $f$ is a periodic or pseudoperiodic automorphism of $X$, which contradicts one of the hypotheses. 

Hence there exists $\ell\in (0,1)$ with $\ell_n\leq \ell$, for all $n$. Consequently, Lemma~\ref{lemma1} yields
\[
\omega_X\bigl(f_n(z),f_n(w)\bigr)\leq \ell \omega_X(z,w)\;,
\]
for all $z,w\in K$ and $n\in\mathbb{N}$. 

Now let $\varepsilon>0$. Choose $\delta>0$ for which $\ell(1+\delta)<1$ and $N\in\mathbb{N}$ for which $\omega_X(a_n,a_{n-1})<\varepsilon\delta$, for $n\geq N$.  Then
for $n\ge N$ we have
\[
\omega_X\bigl(L_n(z_0),f_n(a_n)\bigr)\leq \ell\omega_X\bigl(L_{n-1}(z_0),a_n)< \ell\left(\omega_X\bigl(L_{n-1}(z_0),a_{n-1}\bigr)+\varepsilon\delta\right)\;. 
\] 
Let $s_n=\omega_X\bigl(L_n(z_0),a_n\bigr)$, for $n\in\mathbb{N}$. Since $a_n$ is a fixed point of $f_n$, we have
\[
s_n< \ell(s_{n-1}+\varepsilon\delta)\;,
\]
for all $n\ge N$.
Consider any integer $m> N$. If $s_{m-1}<\varepsilon$, then $s_m\leq \ell(1+\delta)\varepsilon<\varepsilon$. Alternatively, if $s_{m-1}\geq \varepsilon$, then $s_m\leq \ell(1+\delta)s_{m-1}$. Consequently, $s_n<\varepsilon$ for sufficiently large $n$. 

It follows that $\omega_X\bigl(L_n(z_0),a_n)\to 0$, so $L_n(z_0)\to a$. Since $z_0\in X$ is arbitrary, we see that $\{L_n\}$ converges pointwise (and, by Vitali's theorem, uniformly on compact subsets) to $a$.
\end{proof}

\section{Examples of diverging left iterated function system}
\label{section8}

Here we provide the example promised in the introduction of a sequence $\{f_n\}$ in $\Hol(X,X)$ that converges slowly to $F\in \Hol(X,X)$ for which $\{F^n\}$ is compactly divergent whereas $\{L_n\}$ neither converges in $\Hol(X,X)$ and nor is it compactly divergent.

We choose $X$ to be the upper half-plane $\mathbb{H}^+=\{z\in\C\mid \mathrm{Im}\, z>0\}$ and let $F\in\Hol(\mathbb{H}^+,\mathbb{H}^+)$ be the map $F(z)=z-1$. Then the sequence $\{F^n\}$ is compactly divergent: indeed, it diverges to~$\infty$ on the Riemann sphere.

For $n\in\mathbb{N}$, let $F_n\in\Hol(\mathbb{H}^+,\mathbb{H}^+)$ be given by
\[
F_n(z)=F(z)+\frac{1}{n}i=z-1+\frac{1}{n}i\;.
\] 
Clearly, $F_n\to F$. Let
\[
\varphi_n(z)=\frac{n z-1}{z+n}\;.
\]
It is easy to check that $\varphi_n$ is an elliptic automorphism of~$\mathbb{H}^+$ that fixes~$i$ and $\varphi_n\to\id_{\mathbb{H}^+}$. 

Now let $g_n=\varphi_n\circ F\circ\varphi^{-1}_n$. By construction, each $g_n$ is a parabolic automorphism of~$\mathbb{H}^+$, and since $\varphi_n(\infty)=n$ it follows that $g_n(n)=n$. Consequently, for each $z\in\mathbb{H}^+$, we have $g_n^k(z)\to n$ as $k\to \infty$. Moreover, $g_n\to F$ as $n\to+\infty$.

We shall now build recursively a sequence $\{f_n\}$ in $\Hol(\mathbb{H}^+,\mathbb{H}^+)$ and a sequence $\{m_n\}$ of strictly increasing positive integers, for $n=0,1,2,\dotsc$, such that $\{f_n\}$, $\{m_n\}$, and the left iterated function system $L_n=f_n\circ f_{n-1}\circ \dots \circ f_0$ have the following properties (for $n\in\mathbb{N}$):
\begin{enumerate}
\item[(a)] $m_0=0$ and $m_1=1$;
\item[(b)] $f_0=g_0$ and $f_1=\id_{\mathbb{H}^+}$;
\item[(c)] $m_{2n+1}=m_{2n}+n$;
\item[(d)] $f_{m_{2n-1}+1}=f_{m_{2n-1}+2}=\dots=f_{m_{2n}}=g_n$ and $f_{m_{2n}+1}=f_{m_{2n}+2}=\dots=f_{m_{2n+1}}=F_n$;
\item[(e)] $|L_{m_{2n}}(i)|>n-1/2^n$ and $|L_{m_{2n+1}}(i)-i|<1/2^n$.
\end{enumerate}

First we establish properties (c)--(e) for $n=1$. Choose a positive integer $k_1$ such that $|g_1^{k_1}\bigl(L_{m_1}(i)\bigr)-1|<1/2$. Then
\[
|g_1^{k_1}\bigl(L_{m_1}(i)\bigr)|>1-\frac{1}{2}\;.
\]
Moreover, we have
\[
F_1\bigl(g_1^{k_1}\bigl(L_{m_1}(i)\bigr)\bigr)-i=g_1^{k_1}\bigl(L_{m_1}(i)\bigr)-1\;.
\]
By defining $m_2=m_1+k_1$, $m_3=m_2+1$, 
$f_{m_1+1}=f_{m_1+2}=\dots=f_{m_2}=g_1$, and $f_{m_3}=F_1$ we see that conditions (c)--(e) are satisfied for $n=1$.

Suppose now that we have found integers $m_0<m_1<\cdots<m_{2n-1}$ as well as functions $f_0,f_1,\dots,f_{m_{2n-1}}\in\Hol(\mathbb{H}^+,\mathbb{H}^+)$ satisfying (a)--(e). Choose $k_n\in\mathbb{N}$ such that
\[
|g_n^{k_n}\bigl(L_{m_{2n-1}}(i)\bigr)-n|<\frac{1}{2^n}.
\] 
Then
\[
\bigl|g_n^{k_n}\bigl(L_{m_{2n-1}}(i)\bigr)\bigr|>n-\frac{1}{2^n}\;.
\]
Morever, since $F_n^n(w)=w-n+i$, we have
\[
F_n^n\bigl(g_n^{k_n}\bigl(L_{m_{2n-1}}(i)\bigr)\bigr)-i=g_n^{k_n}\bigl(L_{m_{2n-1}}(i)\bigr)-n\;.
\]
Defining $m_{2n}=m_{2n-1}+k_n$ and $m_{2n+1}=m_{2n}+n$, and choosing $f_{m_{2n-1}+1},f_{m_{2n-1}+2},\dots,f_{m_{2n+1}}$ as in condition (d), we see that conditions (c)--(e) are satisfied for $n$, as required.

In this way we have constructed a sequence $\{f_n\}\subset\Hol(\mathbb{H}^+,\mathbb{H}^+)$ that converges slowly to~$F$ and that generates a left iterated function system $\{L_n\}$ with $L_{m_{2n}}(i)\to \infty$
and $L_{m_{2n+1}}(i)\to i$ as $n\to+\infty$. In particular, $\{L_n\}$ neither converges and nor is it compactly divergent. 

With a less explicit argument we can build another example of a badly behaved left iterated function system.

\begin{proposition}
\label{th:dense}
There exists a sequence $\{\gamma_n\}\subset\Aut(\D)$ with $\gamma_n\to\id_\D$ such that the left iterated function system $\{L_n\}$ generated by~$\{\gamma_n\}$ is dense 
in~$\Aut(\D)$.
\end{proposition}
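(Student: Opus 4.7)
The plan is to build the $\gamma_n$ in blocks so that at the end of the $k$-th block the partial composition $L_n$ equals exactly a prescribed element of a countable dense subset of $\Aut(\D)$, while simultaneously arranging that every $\gamma_n$ in the $k$-th block is within $1/k$ of $\id_\D$ in a fixed metric compatible with the compact-open topology.

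First I would fix a countable dense set $\{\alpha_k\}_{k\in\mathbb{N}}\subset\Aut(\D)$ (which exists because $\Aut(\D)$ is a connected finite-dimensional Lie group, hence separable), and a metric $d$ on $\Aut(\D)$ inducing the compact-open topology. Set $n_0=0$ and $L_0=\id_\D$. Given $L_{n_{k-1}}\in\Aut(\D)$, let $\beta_k=\alpha_k\circ L_{n_{k-1}}^{-1}\in\Aut(\D)$ and pick a continuous path $p\colon[0,1]\to\Aut(\D)$ with $p(0)=\id_\D$ and $p(1)=\beta_k$; such a path exists because $\Aut(\D)$ is path-connected. The map $(s,t)\mapsto p(t)\circ p(s)^{-1}$ is continuous on $[0,1]^2$ and equals $\id_\D$ on the diagonal, so it is uniformly continuous and we may choose $m_k\in\mathbb{N}$ so large that $d\bigl(p(j/m_k)\circ p((j-1)/m_k)^{-1},\id_\D\bigr)<1/k$ for every $j=1,\dots,m_k$.

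I would then set $n_k=n_{k-1}+m_k$ and, for $j=1,\dots,m_k$, define
\[
\gamma_{n_{k-1}+j}=p(j/m_k)\circ p\bigl((j-1)/m_k\bigr)^{-1}.
\]
A telescoping computation gives $\gamma_{n_k}\circ\cdots\circ\gamma_{n_{k-1}+1}=p(1)=\beta_k$, so $L_{n_k}=\beta_k\circ L_{n_{k-1}}=\alpha_k$. Hence the subsequence $\{L_{n_k}\}$ enumerates the dense set $\{\alpha_k\}$, which makes $\{L_n\}$ dense in $\Aut(\D)$. Moreover, for $n>n_{k-1}$ we have $d(\gamma_n,\id_\D)<1/k$, so $\gamma_n\to\id_\D$.

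The one point that requires a genuine argument (rather than bookkeeping) is that any $\beta\in\Aut(\D)$ can be written as a product of arbitrarily many factors in $\Aut(\D)$ each arbitrarily close to $\id_\D$; I would expect this to be the only real obstacle, and it is handled as above by using path-connectedness of $\Aut(\D)$ together with uniform continuity of $(s,t)\mapsto p(t)\circ p(s)^{-1}$ on $[0,1]^2$. Everything else is a recursive definition that forces $L_{n_k}=\alpha_k$ exactly.
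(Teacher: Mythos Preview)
Your proof is correct and follows the same overall architecture as the paper's: fix a countable dense set $\{\alpha_k\}$ in $\Aut(\D)$, then build the sequence $\{\gamma_n\}$ in blocks so that the $k$-th block consists of automorphisms within $1/k$ of $\id_\D$ and its product equals $\alpha_k\circ L_{n_{k-1}}^{-1}$, forcing $L_{n_k}=\alpha_k$.

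The one genuine difference is in how you justify the key fact that every $\beta\in\Aut(\D)$ is a finite product of automorphisms lying in an arbitrarily small neighbourhood of $\id_\D$. The paper argues abstractly: a symmetric open neighbourhood of the identity generates an open subgroup, open subgroups are closed, and $\Aut(\D)$ is connected, so this subgroup is everything. You instead give an explicit construction, joining $\id_\D$ to $\beta$ by a path and using uniform continuity of $(s,t)\mapsto p(t)\circ p(s)^{-1}$ on the compact square $[0,1]^2$ to chop the path into short telescoping increments. Your route is slightly more hands-on and yields the factors explicitly; the paper's route is a cleaner one-line appeal to a standard fact about connected topological groups. Both are valid and equally short in practice.
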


\begin{proof}
Choose a sequence $\{\mathcal{U}_n\}$ of open neighbourhoods of~$\id_\D$ satisfying the following properties:
\begin{enumerate}
\item $\mathcal{U}_n^{-1}=\mathcal{U}_n$;
\item $\mathcal{U}_{n+1}\subset\mathcal{U}_n$;
\item $\bigcap_n\mathcal{U}_n=\{\id_\D\}$.
\end{enumerate}
For $n\in\mathbb{N}$, the semigroup generated by $\mathcal{U}_n$ is an open subgroup of~$\Aut(\D)$, thanks to property~(i). However, open subgroups of a topological group are also closed. Since $\Aut(\D)$ is connected, it follows that the semigroup generated by $\mathcal{U}_n$ coincides with $\Aut(\D)$. 

Now choose a countable family $\{\phi_j\}\subset\Aut(\D)$ that is dense in~$\Aut(\D)$. Since the semigroup generated by $\mathcal{U}_1$ is~$\Aut(\D)$, we can find
$\gamma_1,\ldots,\gamma_{n_1}\in\mathcal{U}_1$ such that $L_{n_1}:=\gamma_{n_1}\circ\cdots\circ\gamma_1=\phi_1$.  Similarly, since the semigroup generated
by $\mathcal{U}_2$ is~$\Aut(\D)$, we can find $\gamma_{n_1+1},\ldots,\gamma_{n_2}\in\mathcal{U}_2$ such that $\gamma_{n_2}\circ\cdots\circ\gamma_{n_1+1}=\phi_2\circ L_{n_1}^{-1}$; in particular, $L_{n_2}:=\gamma_{n_2}\circ\cdots\circ\gamma_{n_1+1}\circ L_{n_1}=\phi_2$.

Arguing in this way we can find an increasing sequence of natural numbers $\{n_j\}$ and a sequence $\{\gamma_n\}$ of automorphisms such that for each $j\ge 1$ we have
$\gamma_n\in\mathcal{U}_j$ when $n_{j-1}<n\le n_j$ (where $n_0=0$) and $L_{n_j}:=\gamma_{n_j}\circ\cdots\circ\gamma_1=\phi_j$. Then $\{\gamma_n\}$ is as required, because property (iii) implies that $\gamma_n\to\id_\D$. 
\end{proof}


\section{Straightening of right iterated function systems}
\label{section9}

In this section we prove Theorem~\ref{theoremJ}, which is a counterpart to Theorem~\ref{theoremA} for right rather than left iterated function systems. 

\begin{definition}
\label{def:ibo}
Given $\{f_n\}\subset\Hol(X,X)$, a \emph{backward orbit} for the right iterated function system $\{R_n\}$ generated by $\{f_n\}$ is a sequence $\{w_n\}\subset X$ such that
$f_n(w_n)=w_{n-1}$, for all $n\in\mathbb{N}$. In particular, $R_n(w_n)=w_0$ for all $n\in\mathbb{N}$. 
\end{definition}

\begin{proof}[Proof of Theorem~\ref{theoremJ}]
Up to conjugation by an automorphism of~$\D$ we can assume that $w_0=0$. Choose $\gamma_n\in\Aut(\D)$ such that $\gamma_n(w_n)=0$ and 
\[
\frac{\gamma_{n-1}'(w_{n-1})f_{n}'(w_{n})}{\gamma_{n}'(w_{n})}\geq 0\;,
\]
for $n\in\mathbb{N}$, where $\gamma_0=\id_\D$. Let $H_n=R_n\circ \gamma_n^{-1}$ and $g_n = \gamma_{n-1}\circ f_n\circ \gamma_n^{-1}$. Then $\{H_n\}$ is the right iterated function system generated by $\{g_n\}$; moreover, $H_n(0)=0$ and $g_n(0)=0$ for all $n\in\mathbb{N}$. Taking derivatives at 0, we obtain
\[
g_n'(0)=\gamma_{n-1}'\bigl(f_n\bigl(\gamma_n^{-1}(0)\bigr)\bigr)f_n'\bigl(\gamma_n^{-1}(0)\bigr)(\gamma_n^{-1})'(0)=\frac{\gamma_{n-1}'(w_{n-1})f_{n}'(w_{n})}{\gamma_{n}'(w_{n})}\geq 0\;.
\]
Given any compact disc $K$ centred at $0$, we have $g_n(K)\subseteq K$, so $H_n(K)\subseteq H_{n-1}(K)$. Consequently, if a subsequence of $\{H_n\}$ converges to a constant, then this constant must be 0 and the whole sequence must converge to~0.

The other possibility is that no subsequence of $\{H_n\}$ converges to a constant function; we claim that then $\{H_n\}$ converges to a function $h\in\Hol(\D,\D)$. 
Assume, by contradiction, that this is not the case. 
Since $\{H_n\}$ is relatively compact in $\Hol(\D,\D)$ (by Theorem~\ref{theorem1}), there are two convergent subsequences $H_{m_i}\to \phi$ and $H_{n_j}\to \psi$, with $\phi\neq\psi$. Without loss of generality, we can assume that $n_1<m_1<n_2<m_2<\dotsb$; then we can write
\(
H_{m_i}=H_{n_i}\circ \tau_i, 
\)
where $\tau_i=g_{n_i+1}\circ g_{n_i+2}\circ \dotsb\circ g_{m_i}$.
Each member of the sequence $\{\tau_i\}$ fixes $0$, so this sequence is relatively compact and hence has a convergent subsequence with limit $\alpha\in\Hol(\D,\D)$, say. Then $\phi = \psi \circ \alpha$. In a similar manner we obtain $\psi = \phi \circ \beta$, for some $\beta\in \Hol(\D,\D)$. Hence $\phi= \phi\circ\sigma$, where $\sigma=\beta\circ \alpha$; notice that $\sigma(0)=0$. 

Iterating gives $\phi(z)=\phi\bigl(\sigma^n(z)\bigr)$, for all $n\in\mathbb{N}$. If $\sigma^n\to 0$ then $\phi$ is the constant function with value 0, contrary to the assumption that no subsequence of $\{H_n\}$ converges to a constant. Therefore $\sigma$ must be a rotation about the origin. This implies that $\alpha$ and $\beta$ are both rotations about the origin. Now, we know that $g_n'(0)\geq 0$ for all $n$, so $\tau_i'(0)\geq 0$ for all $i$, and hence $\alpha'(0)\geq 0$. Therefore $\alpha$ is the identity map and, thus, $\phi=\psi$, which is a contradiction. It follows that the sequence $\{H_n\}$ must converge to a map $h\in\Hol(\D,\D)$, as claimed.

For uniqueness, suppose that there are two sequences $\{\gamma_n\}$ and $\{\chi_n\}$ in $\Aut(\D)$ satisfying $\gamma_n(w_n)=\chi_n(w_n)=0$ and with $R_n\circ \gamma_n^{-1}\to h$ and $R_n\circ \chi_n^{-1}\to g$, for  $h,g\in \Hol(\D,\D)$. Let $\alpha_n=\gamma_n\circ \chi_n^{-1}$, so $R_n\circ \chi_n^{-1} = (R_n\circ \gamma_n^{-1})\circ \alpha_n$. The automorphism $\alpha_n$ fixes $0$, so we can find a subsequence $\{\alpha_{n_j}\}$ that converges to $\alpha\in\Aut(\D)$. This limit satisfies $g=h\circ\alpha$, as required.
\end{proof}

\section{Convergent right iterated function systems}\label{section10}

Here we prove Theorem~\ref{theoremK}. 

\begin{proof}[Proof of Theorem~\ref{theoremK}] 
Let $\mathscr{S}$ be a relatively compact semigroup in $\Hol(X,X)$ containing $\{R_n\}$. The implication (i)$\implies$(ii) is immediate, and  (iii)$\implies$(iv) follows from Corollary~\ref{th:3.6}. We will now prove that (iv)$\implies$(i); in fact, we prove the contrapositive assertion. 

Suppose that $\{R_n\}$ does not converge to a constant. Fix $z_0\in X$ and let $K=\overline{\mathscr{S}(z_0)}$; this set is compact and $\mathscr{S}$-invariant. Hence $K\supseteq R_1(K)\supseteq R_2(K)\supseteq\dotsb$. Consequently, there exists a subsequence $\{n_i\}$ with $R_{n_i}\to F$, where $F$ is nonconstant. Choose $z\in X$ for which $F^\#(z)\neq 0$. We have $R_{n_i}^\#(z)\to F^\#(z)$. Let $\lambda = F^\#(z)/2$. Then $R_{n_i}^\#(z)>\lambda$, for sufficiently large $n_i$. Now, 
\[
R^\#_{n_i}(z) = \prod_{j=1}^{n_i} f_j^\#\bigl(R_{j,n_i}(z)\bigr)\;, 
\]
where $R_{j,n}=f_{j+1}\circ f_{j+2}\circ\dots\circ f_n$ and $R_{n,n}=\id_X$. Let $\alpha_j=f_j^\#\bigl(R_{j,n_i}(z)\bigr)$. Then 
\[
-\log \prod_{j=1}^{n_i}\alpha_j  = -\sum_{j=1}^{n_i}\log \alpha_j = -\sum_{j=1}^{n_i}\log(1-(1-\alpha_j))\geq \sum_{j=1}^{n_i}(1-\alpha_j)\;,
\]
since $-\log(1-x)\geq x$, for $0\leq x<1$. Let $\mu=-\log \lambda>0$. Then $-\log R_{n_i}^\#(z) <\mu$, so
\[
\sum_{j=1}^{n_i}\bigl(1-f_j^\#\bigl(R_{j,n_i}(z)\bigr)\bigr)<\mu\;,
\]
for all sufficiently large $n_i$. By relative compactness of $\mathscr{S}$ and Corollary~\ref{th:3.6} we deduce that $\sum_n \bigl(1-f_n^\#(z)\bigr)<+\infty$, as required.

It remains to prove that (ii)$\implies$(iii). Once again, we prove the contrapositive assertion. 

Suppose then that $\sum_n\bigl(1-f_n^\#(z_0)\bigr)<+\infty$ for some $z_0\in X$. Let $K=\overline{\mathscr{S}(z_0)}$. Observe that $f_n^\#(z_0)\to 1$, so, by Corollary~\ref{corollary1}, there is a positive integer $N$ for which $f_n^\#(z)\neq 0$, for $n>N$ and $z\in K$. By discarding the first $N$ maps $f_1,f_2,\dots,f_N$ (and then relabelling), we can in fact assume that $f_n^\#(z)\neq 0$ for all $n\in\mathbb{N}$ and $z\in K$.

Next, let $w_n\in K$ be such that $f_n^\#(w_n)=\inf_{z\in K} f_n^\#(z)$. Then $\sum_n \bigl(1-f_n^\#(w_n)\bigr)<+\infty$, by Corollary~\ref{th:3.6}. Hence $\prod_n f_n^\#(w_n)\neq 0$. Now, for $z\in K$,
\[
R^\#_n(z) = \prod_{j=1}^n f_j^\#\bigl(R_{j,n}(z)\bigr)\geq \prod_{j=1}^\infty f_j^\#(w_j)>0. 
\]
Let $F$ be a limit function of $\{R_n\}$. Then $F^\#(z)\neq 0$, for $z\in K$, so $F$ is not constant. Hence $\{R_n\}$ does not contain a subsequence that converges to a constant, as required.
\end{proof}

\begin{bibdiv}
\begin{biblist}
\setlength{\parskip}{0.2pt}
\bib{Ab2022}{book}{
  title={Holomorphic Dynamics on Hyperbolic Riemann Surfaces},
  author={Abate, M.},
  year={2022},
  publisher={De Gruyter}
}

\bib{AbCh2022}{article}{
   author={Abate, M.},
   author={Christodoulou, A.},
   title={Random iteration on hyperbolic Riemann surfaces},
   journal={Ann. Mat. Pura Appl. (4)},
   volume={201},
   date={2022},
   number={4},
   pages={2021--2035},
}

\bib{Ba1988}{book}{
   author={Barnsley, Michael},
   title={Fractals everywhere},
   publisher={Academic Press, Inc., Boston, MA},
   date={1988},
   pages={xii+396},
}

\bib{BaDe1985}{article}{
   author={Barnsley, M. F.},
   author={Demko, S.},
   title={Iterated function systems and the global construction of fractals},
   journal={Proc. Roy. Soc. London Ser. A},
   volume={399},
   date={1985},
   number={1817},
   pages={243--275},
}

\bib{Be2001}{article}{
   author={Beardon, A. F.},
   title={Semi-groups of analytic maps},
   journal={Comput. Methods Funct. Theory},
   volume={1},
   date={2001},
   number={1},
   pages={249--258},
}

\bib{BeCaMiNg2004}{article}{
   author={Beardon, A. F.},
   author={Carne, T. K.},
   author={Minda, D.},
   author={Ng, T. W.},
   title={Random iteration of analytic maps},
   journal={Ergodic Theory Dynam. Systems},
   volume={24},
   date={2004},
   number={3},
   pages={659--675},
}

\bib{BeMi2007}{article}{
   author={Beardon, A. F.},
   author={Minda, D.},
   title={The hyperbolic metric and geometric function theory},
   conference={
      title={Quasiconformal mappings and their applications},
   },
   book={
      publisher={Narosa, New Delhi},
   },
   date={2007},
   pages={9--56},
}

\bib{BeEvFaRiSt2022}{article}{
   author={Benini, A. M.},
   author={Evdoridou, V.},
   author={Fagella, N.},
   author={Rippon, P. J.},
   author={Stallard, G. M.},
   title={Classifying simply connected wandering domains},
   journal={Math. Ann.},
   volume={383},
   date={2022},
   number={3-4},
   pages={1127--1178},
}

\bib{BeEvFaRiSt2024}{article}{
   author={Benini, Anna Miriam},
   author={Evdoridou, Vasiliki},
   author={Fagella, N\'uria},
   author={Rippon, Philip J.},
   author={Stallard, Gwyneth M.},
   title={Boundary dynamics for holomorphic sequences, non-autonomous
   dynamical systems and wandering domains},
   journal={Adv. Math.},
   volume={446},
   date={2024},
   pages={Paper No. 109673, 51},
}

\bib{BrKrRo2023}{article}{
   author={Bracci, Filippo},
   author={Kraus, Daniela},
   author={Roth, Oliver},
   title={A new Schwarz-Pick lemma at the boundary and rigidity of holomorphic maps},
   journal={Adv. Math.},
   volume={432},
   date={2023},
   pages={109--262},
}

\bib{Fe2022}{article}{
   author={Ferreira, Gustavo Rodrigues},
   title={Multiply connected wandering domains of meromorphic functions:
   internal dynamics and connectivity},
   journal={J. Lond. Math. Soc. (2)},
   volume={106},
   date={2022},
   number={3},
   pages={1897--1919},
}

\bib{Fe2023}{article}{
   author={Ferreira, G. R.},
   title={A note on forward iteration of inner functions},
   journal={Bull. Lond. Math. Soc.},
   volume={55},
   date={2023},
   number={3},
   pages={1143--1153},
}

\bib{GoKa2020}{article}{
   author={Gou\"ezel, S\'ebastien},
   author={Karlsson, Anders},
   title={Subadditive and multiplicative ergodic theorems},
   journal={J. Eur. Math. Soc. (JEMS)},
   volume={22},
   date={2020},
   number={6},
   pages={1893--1915},
}

\bib{GuKoMoRo2025}{article}{
   author={Gumenyuk, Pavel},
   author={Kourou, Maria},
   author={Moucha, Annika},
   author={Roth, Oliver},
   title={Hyperbolic distortion and conformality at the boundary},
   journal={Adv. Math.},
   volume={470},
   date={2025},
   pages={Paper No. 110251, 52},
}

\bib{He1941}{article}{
   author={Heins, M. H.},
   title={A generalization of the Aumann-Carath\'{e}odory ``Starrheitssatz.''},
   journal={Duke Math. J.},
   volume={8},
   date={1941},
   pages={312--316},
}

\bib{Heins1941}{article}{
   author={Heins, M. H.},
   title={On the iteration of functions which are analytic and single-valued in a given multiply-connected region},
   journal={Am. J. Math.},
   volume={63},
   date={1941},
   pages={461-480},
}

\bib{He1991}{article}{
   author={Heins, M. H.},
   title={Some results concerning the iteration of analytic functions mapping the open unit disk into itself},
   status={unpublished},
   date={1991}
}

\bib{Hu1981}{article}{
   author={Hutchinson, John E.},
   title={Fractals and self-similarity},
   journal={Indiana Univ. Math. J.},
   volume={30},
   date={1981},
   number={5},
   pages={713--747}
 }

\bib{JaSh2022}{article}{
   author={Jacques, Matthew},
   author={Short, Ian},
   title={Semigroups of isometries of the hyperbolic plane},
   journal={Int. Math. Res. Not. IMRN},
   date={2022},
   number={9},
   pages={6403--6463},
}

\bib{KeLa2007}{book}{
   author={Keen, Linda},
   author={Lakic, Nikola},
   title={Hyperbolic geometry from a local viewpoint},
   series={London Mathematical Society Student Texts},
   volume={68},
   publisher={Cambridge University Press, Cambridge},
   date={2007},
   pages={x+271},
}

\bib{Ke1955}{book}{
   author={Kelley, J. L.},
   title={General topology},
   publisher={D. Van Nostrand Co., Inc., Toronto-New York-London},
   date={1955},
   pages={xiv+298},
}

\bib{Ku2007}{article}{
   author={Kuznetsov, A.},
   title={Semi-groups of analytic functions that contain the identity map},
   journal={Comput. Methods Funct. Theory},
   volume={7},
   date={2007},
   number={1},
   pages={239--247},
   issn={1617-9447},
}


\end{biblist}
\end{bibdiv}

\end{document}